\documentclass[a4paper,11pt]{article}

\usepackage{times}
\usepackage{color}
\usepackage{mathrsfs}
\usepackage{dsfont}
\usepackage{hyperref}
\usepackage{amsmath,amsthm,amsfonts,amssymb,bm, mathtools}
\usepackage{graphicx,psfrag, subfigure,epstopdf, caption}
\usepackage{makeidx}
\usepackage{listings}
\usepackage{titlesec}
\usepackage{graphicx}
\usepackage{pgfplots}
\usepackage[numbers,sort&compress]{natbib}
\usepackage{indentfirst}
\usepackage[T1]{fontenc}
\usepackage[utf8]{inputenc}
\usepackage{authblk}
\usepackage{tikz}

\setlength{\arraycolsep}{0.2mm}
\setlength{\textwidth}{170mm}
\setlength{\hoffset}{-20mm}
\setlength{\textheight}{240mm}
\setlength{\voffset}{-25mm}

\theoremstyle{plain}
\newtheorem{thm}{Theorem}
\newtheorem{lem}{Lemma}

\title{The comparison of two Zagreb-Fermat eccentricity indices}

\author[a]{Xiangrui Pan \thanks{panr2358@gmail.com}}
\author[a]{Cheng Zeng\thanks{czeng@sdtbu.edu.cn (Corresponding author)}}
\author[a]{Longyu Li \thanks{815253894lly@gmail.com}}
\author[a]{Gengji Li \thanks{lgengji@gmail.com}}

\date{}
\affil[a]{{\small{School of Mathematics and Information Science, Shandong Technology and Business University, Yantai, Shandong Province, 264003, P. R. China}}}
\begin{document}

\maketitle
	
\centerline{\large{ABSTRACT}}
	
\vspace{0.2cm}
	In this paper, we focus on comparing the first and second Zagreb-Fermat eccentricity indices of graphs. We show that
	$$\frac{\sum_{uv\in E\left( G \right)}{\varepsilon _3\left( u \right) \varepsilon _3\left( v \right)}}{m\left( G \right)} \leq
	\frac{\sum_{u\in V\left( G \right)}{\varepsilon _{3}^{2}\left( u \right)}}{n\left( G \right)} $$
	holds for all acyclic and unicyclic graphs. Besides, we verify that the inequality may not be applied to graphs with at least two cycles.
	\vspace{0.2cm}
	
	\noindent\textbf{Key words:}  Fermat eccentricity; Zagreb-Fermat eccentricity indices; Acyclic graphs; Unicyclic graphs; Multicyclic graphs
	\section{Introduction}
	In the fields of mathematical chemistry and graph theory, various graph invariants have been developed to characterize the structural properties of chemical compounds and complex networks, see Refs. \cite{todeschini2008handbook,latora2017complex,trinajstic2018chemical,brouwer2011spectra}.
	
	Suppose $G=(V(G),E(G))$ is a connected simple graph with node set $V(G)$ and edge set $E(G)$.
	Gutman and Trinajsti\'c proposed two degree-based topological indices, namely, the first and second Zagreb indices \cite{gutman1972graph}, which are defined by
	\begin{equation*}
		Z_1(G)=\sum_{u\in V(G)}\deg_G^2(u)
	\end{equation*}
	and
	\begin{equation*}
		Z_2(G)=\sum_{uv\in E(G)}\deg_G(u)\deg_G(v),
	\end{equation*}
	respectively. Here $\deg_G(u)$ is the degree of vertex $u$ in $G$. Then, Gutman, Ru\v s\v ci\'c, Trinajsti\'c and Wilcox \cite{gutman1975graph} elaborated Zagreb indices. In Ref. \cite{borovicanin2017bounds}, a survey of the most significant estimates of Zagreb indices has been introduced. Todeschini and Consonni \cite{todeschini2008handbook, consonni2009molecular} pointed out that the Zagreb indices and their variants have a wide range of applications in QSPR and QSAR models.
	
	We denote by $n=n(G)$ the number of vertices of $G$ and by $m=m(G)$ the number of its edges. Recently, a conjecture about the averages of two Zagreb indices has been studied, that is,
	\begin{equation*}
		\frac{Z_2(G)}{m(G)}\geq\frac{Z_1(G)}{n(G)}.
	\end{equation*}
	Although this conjecture has been disproved, it has been shown to hold for all graphs with the maximum degree at most four \cite{hansen2007comparing}, for acyclic graphs \cite{vukicevic2008comparing} and for unicyclic graphs \cite{horoldagva2009comparing}. To better describe the structural characteristics of graphs and chemical compounds, Zagreb index variants are continuously extended, such as improved Zagreb index, regularized Zagreb index, etc. \cite{zhou2009novel,gutman2013degree,gutman2018beyond}.
	
	Another important invariant is eccentricity $\varepsilon_2(u;G)$ of a vertex $u$ in $G$, namely the maximum distance from $u$ to other vertices in $G$, which is defined by $\varepsilon_2(u;G):=\max\{d_G(u,v):v\in V(G)\}$. In analogy with the first and the second Zagreb indices, the first, $E_1(G)$ and second, $E_2(G)$, Zagreb eccentricity indices \cite{indices2010note, ghorbani2012new}, an important class of graph indices, were proposed by replacing degrees by eccentricities of the vertices. Here the first and second Zagreb eccentricity indices are defined by
	$$E_1(G)=\sum_{u\in V\left( G \right)}{\varepsilon _{2}^{2}\left( u \right)},\ E_2(G)=\sum_{uv\in E\left( G \right)}{\varepsilon _2\left( u \right) \varepsilon _2\left( v \right)}.$$
	respectively. The two Zagreb eccentricity indices attract a large amount of interest. Some general mathematical properties of $E_1(G)$ and $E_2(G)$ have been investigated in \cite{2011On, Das2013Some}. Very recently, Zagreb eccentricity indices have been applied to fractals \cite{xu2023fractal} to show the structural properties of fractals.
	
	In \cite{indices2010note}, Vuki{\v c}evi\'c and Graovac compared $E_1(G)/n(G)$ with $E_2(G)/m(G)$. They showed that the inequality
	\begin{equation*}
		\frac{E_2(G)}{m(G)}\leq\frac{E_1(G)}{n(G)}
	\end{equation*}
	holds for all acyclic and unicyclic graphs and is not always valid for general graphs.  Based on the conclusions in \cite{indices2010note}, Qi and Du \cite{2017On} determined the minimum and maximum Zagreb eccentricity indices of trees, while Qi, Zhou and Li \cite{2017Zagreb} presented the minimum and maximum Zagreb eccentricity of unicycle graphs.

Given a vertex triplet $S=\{u,v,w\}$ of $V(G)$, the distance of $S$ is the minimum of the total distance from a vertex $\sigma\in V(G)$ to the three vertices in $S$, and we may write
$$d_G(S)=\mathcal{F}_G(u,v,w)=\min_{\sigma\in V(G)}\{d(\sigma,u)+d(\sigma,v)+d(\sigma,w)\}.$$
This concept in Euclidean space was first raised by Fermat \cite{1999Geometric} and then extended in graphs. So the distance $\mathcal{F}_G(u,v,w)$ is also called Fermat distance. Similarly, we call $\varepsilon_3(u;G)=\max_{v,w\in V(G)}\mathcal{F}_G(u,v,w)$ the Fermat eccentricity of $u$ in $G$.  For notational convenience, we sometimes use $\varepsilon _{3}\left( u \right)$ instead of $\varepsilon _{3}\left( u;G \right)$. Very recently, Fermat eccentricity and its average were first investigated by  Li, Yu and Klav\v{z}ar et al. \cite{li2021average, li2021average2}. They also extended this concept to Steiner $k$-eccentricity, see \cite{li2021steiner}. Fermat distance, Steiner distance, and some of their related variants have been widely applied to many fields, including operations research, VLSI design, optical and wireless communication networks \cite{du2008steiner}.  
	
We now introduce two modified Zagreb eccentricity indices based on Fermat eccentricity, that is, the first Zagreb-Fermat eccentricity index
	\begin{equation*}
		F_1(G)=\sum_{u\in V\left( G \right)}{\varepsilon _{3}^{2}\left( u \right)},
	\end{equation*}
and the second Zagreb-Fermat eccentricity index
	\begin{equation*}
		F_2(G)=\sum_{uv\in E\left( G \right)}{\varepsilon _3\left( u \right) \varepsilon _3\left( v \right)}.
	\end{equation*}
	
In this paper, we focus on comparison of the first and second Zagreb-Fermat eccentricity indices and show that inequality
\begin{equation}\label{eq:important}
		\frac{F_2(G)}{m\left( G \right) }\leq \frac{F_1(G)}{n\left( G \right)}
\end{equation}
holds for all acyclic and unicyclic graphs. Based on two counterexamples given in Section \ref{sec4}, we observe that the inequality is not always valid for multicyclic graphs.
\section{Notations and Preliminaries}
We first recall some notations and definitions. The degree of a vertex $u\in V(G)$, denoted by $\deg_G(u)$, is the number of adjacent vertices of $u$ in graph $G$. We call a vertex a leaf if its degree is $1$. If a vertex has degree at least $2$, then it is an internal vertex. Let $P_n$, $C_n$, $K_{1,n-1}$ be the $n$-vertex path, cycle and star. Define the radius and diameter of $G$ by $\operatorname{rad}(G):=\min_{u\in V(G)}\varepsilon_{2}(u)$ and  $\operatorname{diam}(G):=\max_{u\in V(G)}\varepsilon_{2}(u)$, respectively. A vertex that realizes $\operatorname{rad}(G)$ is called the central vertex. It is well known that tree $T$ contains one or two central vertices. A path in $G$ of length $\operatorname{diam}(G)$ is called the diametrical path. Note that every graph contains at least one diametrical path. A path joining a vertex and its most distance vertex is called a eccentric path.
	
We now focus on Fermat eccentricity. The vertex $\sigma$ that minimize the sum $d(\sigma,u)+d(\sigma,v)+d(\sigma,w)$ is called Fermat vertex of vertex set $\{u,v,w\}$. Meanwhile, we call the minimal spanning tree of vertex set $\{u,v,w\}$ the Fermat tree of $\{u,v,w\}$. Note that the Fermat vertex and the Fermat tree of a given vertex triplet in a graph may not be unique.  We call the vertices that realize $\varepsilon_3(u;G)$ the Fermat eccentric vertices of $u$ and the corresponding Fermat tree the Fermat eccentric $u$-tree. The following lemma shows that the difference of Fermat eccentricity of  two adjacent vertices can not exceed $1$.
\begin{lem}\label{lem7}
	For all $uv \in E\left(G\right)$, we have $$\left| \varepsilon _3\left( u \right) -\varepsilon _3\left( v \right) \right|\le 1.$$
\end{lem}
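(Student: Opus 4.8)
The plan is to prove the two one-sided estimates $\varepsilon_3(u)\le\varepsilon_3(v)+1$ and $\varepsilon_3(v)\le\varepsilon_3(u)+1$ separately; by the symmetry of the roles of $u$ and $v$ it suffices to establish the first. The guiding observation is that both $\mathcal{F}_G$ and $\varepsilon_3$ are obtained from the ordinary distance $d_G$ by a $\min$ over candidate Fermat vertices followed by a $\max$ over the remaining two vertices of the triplet, and the elementary triangle inequality $d(\sigma,u)\le d(\sigma,v)+d(u,v)=d(\sigma,v)+1$ (valid since $uv\in E(G)$) passes through both operations in the right direction.

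Concretely, I would first fix a pair $a,b$ of Fermat eccentric vertices of $u$, so that $\varepsilon_3(u)=\mathcal{F}_G(u,a,b)=\min_{\sigma\in V(G)}\bigl(d(\sigma,u)+d(\sigma,a)+d(\sigma,b)\bigr)$. For every $\sigma\in V(G)$ the triangle inequality gives $d(\sigma,u)+d(\sigma,a)+d(\sigma,b)\le\bigl(d(\sigma,v)+d(\sigma,a)+d(\sigma,b)\bigr)+1$, and taking the minimum over $\sigma$ on both sides yields $\mathcal{F}_G(u,a,b)\le\mathcal{F}_G(v,a,b)+1$. Since $\varepsilon_3(v)=\max_{x,y\in V(G)}\mathcal{F}_G(v,x,y)\ge\mathcal{F}_G(v,a,b)$, we get $\varepsilon_3(u)=\mathcal{F}_G(u,a,b)\le\mathcal{F}_G(v,a,b)+1\le\varepsilon_3(v)+1$. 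Interchanging $u$ and $v$ gives $\varepsilon_3(v)\le\varepsilon_3(u)+1$, and the two bounds together give $|\varepsilon_3(u)-\varepsilon_3(v)|\le 1$.

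I do not expect a genuine obstacle here; the only points requiring care are keeping the direction of the inequality straight when commuting it past the $\min$ and the $\max$, and using the edge $uv$ exactly once so that the extra term is precisely $+1$. An equivalent route would be to take a Fermat eccentric $u$-tree and attach the edge $vu$ to relate it to a Fermat tree for $\{v,a,b\}$, but the distance-function formulation above is cleaner and avoids any case analysis on where $v$ sits relative to that tree.
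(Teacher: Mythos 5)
Your proposal is correct and takes essentially the same route as the paper: both establish the $1$-Lipschitz property $|\mathcal{F}(u,x,y)-\mathcal{F}(v,x,y)|\le 1$ for adjacent $u,v$ (you spell out the triangle inequality passing through the $\min$ over $\sigma$, which the paper only asserts), then apply it to a pair of Fermat eccentric vertices and use $\mathcal{F}(v,a,b)\le\varepsilon_3(v)$ to conclude. The only cosmetic difference is that you argue one direction and invoke symmetry, whereas the paper writes out both directions with two pairs of eccentric vertices.
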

\begin{proof}
Note that $u$, $v$ are adjacent. Hence for any vertices $x$, $y$ in $G$, $|\mathcal{F}(u,x,y)-\mathcal{F}(v,x,y)|\leq1$ holds by the minimum definition of the Fermat distance.  Let $\omega_1$, $\omega_2$ be two vertices that realize $\varepsilon_3(u)$. It is obvious that
\begin{equation*}
			|\varepsilon_3(u)-\mathcal{F}(v,\omega_1,\omega_2)|\leq1.
\end{equation*}
		Similarly, suppose that $\varepsilon_3(v)=\mathcal{F}(v,\omega_3,\omega_4)$, we thus have
		\begin{equation*}
			|\varepsilon_3(v)-\mathcal{F}(u,\omega_3,\omega_4)|\leq1.
		\end{equation*}
		Then we can know that
		\begin{equation*}
			\varepsilon _3\left( u \right)  -1\le \mathcal{F} \left( v,\omega _1,\omega _2 \right) \le \varepsilon _3\left( v \right)
		\end{equation*}
		and
		\begin{equation*}
			\varepsilon _3\left( v \right)  -1\le \mathcal{F} \left( u,\omega _3,\omega _4 \right) \le \varepsilon _3\left( u \right).
		\end{equation*}
		We thus get the desired inequality
		$\left| \varepsilon _3\left( v \right) -\varepsilon _3\left( u \right) \right|\leq 1$.
	\end{proof}
	The following lemma is the corollary of Lemma 2.6 in \cite{li2021average} and will be implicitly applied in our paper.
	\begin{lem}
		Suppose $T$ is a tree and $u\in V(T)$. Then the Fermat eccentric $u$-tree contains the longest path starting from $u$. In other words, we can select the farthest vertex from $u$ as a Fermat eccentric vertex of $u$.
	\end{lem}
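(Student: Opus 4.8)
The plan is to reduce everything to the tree-metric identity
$\mathcal F_T(a,b,c)=\tfrac12\bigl(d(a,b)+d(a,c)+d(b,c)\bigr)$,
valid in any tree $T$ (here $d$ is the distance in $T$): each edge of the Steiner/Fermat tree of $\{a,b,c\}$ lies on exactly two of the three pairwise paths, so the number of its edges equals half the sum of the three distances, and this number is precisely the minimum in the definition of $\mathcal F_T$, attained at the median of $a,b,c$. Hence $\varepsilon_3(u)=\tfrac12\max_{v,w\in V(T)}g(v,w)$ with $g(v,w):=d(u,v)+d(u,w)+d(v,w)$, and any Steiner tree of a maximizing pair $\{v,w\}$ is a Fermat eccentric $u$-tree; being connected and containing $u$, it contains the $u$–$x$ path for each of its vertices $x$. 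So it suffices to show that some maximizing pair contains a vertex $z$ with $d(u,z)=\varepsilon_2(u)$: then that Fermat eccentric $u$-tree contains the $u$–$z$ path, which is a longest path starting at $u$.

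First I would fix a vertex $z$ with $d(u,z)=\varepsilon_2(u)$, so that $d(u,z)\ge d(u,x)$ for every $x\in V(T)$, and take an arbitrary maximizing pair $\{v,w\}$, so $g(v,w)=2\varepsilon_3(u)$. The heart of the argument is an exchange step: I claim that $g(v,z)\ge g(v,w)$ or $g(w,z)\ge g(v,w)$. Since both $g(v,z)$ and $g(w,z)$ are at most $2\varepsilon_3(u)=g(v,w)$, whichever inequality holds is in fact an equality, so $\{v,z\}$ or $\{w,z\}$ is again a maximizing pair, and the lemma follows.

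To prove the exchange I would apply the four-point condition of the tree metric to $u,v,w,z$: among
$$A:=d(u,v)+d(w,z),\qquad B:=d(u,w)+d(v,z),\qquad C:=d(u,z)+d(v,w)$$
the maximum is attained at least twice. If $C=B$, then $d(v,z)-d(v,w)=d(u,z)-d(u,w)\ge 0$, so $g(v,z)-g(v,w)=2\bigl(d(u,z)-d(u,w)\bigr)\ge 0$; if $C=A$, then symmetrically $g(w,z)-g(v,w)=2\bigl(d(u,z)-d(u,v)\bigr)\ge 0$. Otherwise $C$ is strictly below the maximum, forcing $A=B$ to be the maximum; then $C\le B$ gives $d(v,z)\ge d(u,z)+d(v,w)-d(u,w)\ge d(v,w)$ (using $d(u,z)\ge d(u,w)$), and combining this with $d(u,z)\ge d(u,w)$ yields $g(v,z)=d(u,v)+d(u,z)+d(v,z)\ge d(u,v)+d(u,w)+d(v,w)=g(v,w)$. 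In each case one of $\{v,z\},\{w,z\}$ is a maximizing pair, which proves the lemma.

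I do not expect a real obstacle here: the only inputs are the tree-metric formula for $\mathcal F_T$ and the four-point condition, both entirely standard (which is also why the paper attributes the statement to \cite[Lemma 2.6]{li2021average}). The only thing to be careful about is purely bookkeeping — in each branch of the four-point alternative one must replace the correct one of $v,w$ by $z$ — and that is exactly what the three cases above record; degenerate coincidences among $u,v,w,z$ cause no trouble since the four-point condition and the case analysis remain valid.
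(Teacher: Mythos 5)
Your proof is correct. Note that the paper itself supplies no argument for this lemma: it is stated as a corollary of Lemma 2.6 of \cite{li2021average} and left unproved, so there is nothing to match your proof against line by line. What you give is a clean, self-contained derivation from two standard facts: the tree-metric identity $\mathcal F_T(a,b,c)=\tfrac12\left(d(a,b)+d(a,c)+d(b,c)\right)$ (which correctly identifies the paper's Fermat distance with the Steiner tree size in a tree), and the four-point condition. Your exchange step is sound in all three branches: when $C=B$ you get $g(v,z)-g(v,w)=2\left(d(u,z)-d(u,w)\right)\ge 0$; when $C=A$ the symmetric estimate applies; and when $A=B$ is the strict maximum, $C\le B$ together with $d(u,z)\ge d(u,w)$ gives $d(v,z)\ge d(v,w)$ and hence $g(v,z)\ge g(v,w)$. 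Since $g$ is maximized at $(v,w)$, the relevant inequality is an equality, so a maximizing pair containing the farthest vertex $z$ exists, and the corresponding Steiner tree, being a connected subtree containing both $u$ and $z$, contains the $u$--$z$ path. This proves exactly the statement as the paper intends it (existence of a Fermat eccentric $u$-tree through a farthest vertex, the eccentric tree not being unique in general). The one thing your write-up leans on implicitly is that the Steiner tree of a maximizing triple $\{u,v,w\}$ is indeed a Fermat eccentric $u$-tree in the paper's sense (its minimal spanning tree of the triple realizing $\varepsilon_3(u)$); that is immediate from the identity you prove, so there is no gap.
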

	In the rest of our paper, much operations on the diametrical path will be added. So for a given path $P_{d+1}=v_0v_1\cdots v_d$, we set path segment of $P_{d+1}$ by $P_{[p:q]}=v_p\cdots v_q$, where $0\leq p<q\leq d$.
	\section{Acyclic Graphs}
	Two basic properties about eccentric path are listed below:
	\begin{enumerate}
		\item If tree has one central vertex, then each eccentric path passes through the central vertex;
		\item If tree have two central vertices, then each eccentric path passes through these two vertices.
	\end{enumerate}
	The above properties ensure that for a tree $T$, all central vertices must be on a diametrical path. Let $P_{d+1}=v_0v_1\cdots v_d$ be a diametrical path of length $d$ which contains all central vertices of a given tree $T$. Now the tree $T$ can be depicted by Fig. \ref{Fig1} when $T$ has only one central vertex $c$, and Fig. \ref{Fig2} when $T$ has two central vertices $c_1$, $c_2$. Moreover, $P_{d+1}$ has one central vertex $c$ when $d$ is odd, and two central vertices $c_1$ and $c_2$ when $d$ is even. Let $T_{v_i}$ be the subtree with root vertex $v_i$, as shown in Figs. \ref{Fig1} and \ref{Fig2}, where $i=1,\ldots,d-1$. We denote $\ell_i$ as the maximal distance from $v_i$ to all leaves in $T_{v_i}$, where $i=1,\ldots,d-1$, that is, $\ell_i=\varepsilon_2(v_i;T_{v_i})$. We set $\ell=\max\{\ell_i:i = 1,\ldots,d-1\}$ and denote one subtree $T_{v_i}$ with $\ell_i=\ell$ by $T_\ell$.

	\begin{figure}[bpth!]
		\centering
		\includegraphics[width=0.7\textwidth]{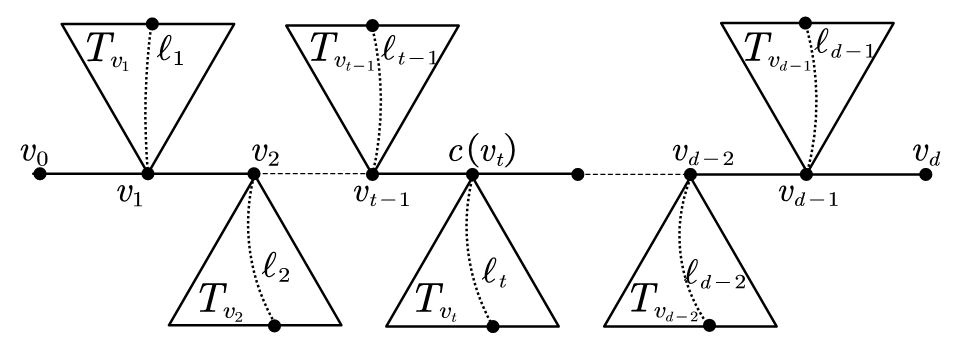}\\
		\caption{$G$ expanding at $P_{d+1}$ (One central vertex case).}\label{Fig1}
	\end{figure}

	\begin{figure}[bpth!]
		\centering
		\includegraphics[width=0.7\textwidth]{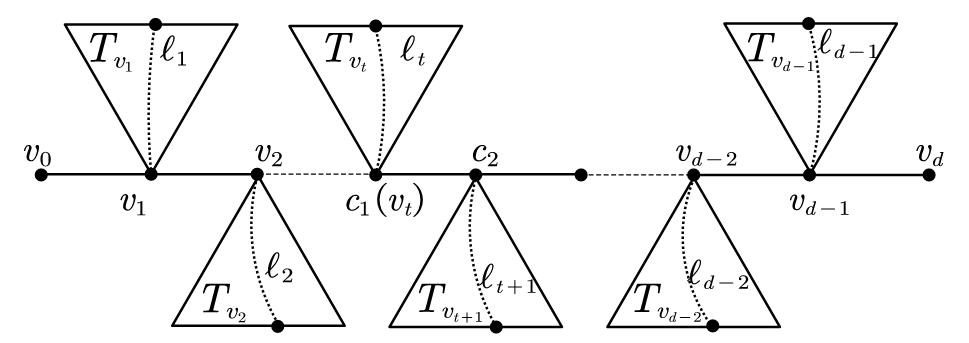}\\
		\caption{$G$ expanding at $P_{d+1}$ (Two central vertices case).}\label{Fig2}
	\end{figure}
	The following lemma is easy to prove since the diametrical path $P_{d+1}$ is the longest path in tree $T$.
	\begin{lem}\label{lem1}
		For all $i=1,\ldots, d-1$, inequality $\ell_i \leq \min\{i,d-i\}$ holds.
	\end{lem}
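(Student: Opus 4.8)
The plan is to use the single key fact that $P_{d+1}$ is a \emph{longest} path of $T$, so every path in $T$ has length at most $d$, and to produce, from a deepest leaf of $T_{v_i}$, two long paths whose lengths encode the two required bounds.

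Concretely, fix $i\in\{1,\ldots,d-1\}$ and pick a leaf $x$ of $T_{v_i}$ with $d_T(v_i,x)=\ell_i$; because $T$ is a tree, the unique $v_i$--$x$ path lies entirely inside $T_{v_i}$. I would first look at the unique $v_0$--$x$ path in $T$. Since $T_{v_i}$ meets the remainder of $T$ only at $v_i$, this path passes through $v_i$ and splits as the segment $P_{[0:i]}=v_0v_1\cdots v_i$ followed by the $v_i$--$x$ path, two pieces overlapping only in $v_i$; it is therefore a genuine path of length $i+\ell_i$, so maximality of $P_{d+1}$ gives $i+\ell_i\le d$, i.e.\ $\ell_i\le d-i$. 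Running the same argument from the other end with $P_{[i:d]}=v_iv_{i+1}\cdots v_d$ yields a $v_d$--$x$ path of length $(d-i)+\ell_i\le d$, hence $\ell_i\le i$. Combining the two inequalities gives $\ell_i\le\min\{i,d-i\}$, as claimed.

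The only delicate point --- and the one I would state explicitly before running the estimates --- is that $T_{v_i}$ shares with the diametrical path exactly the vertex $v_i$ and no other; this guarantees that the concatenated walks above are simple paths rather than mere walks, and it is immediate from the decomposition of $T$ along $P_{d+1}$ pictured in Figs.~\ref{Fig1} and~\ref{Fig2} (the subtree $T_{v_i}$ is the component of $T$ hanging off $v_i$ once the two path-edges at $v_i$ are removed). With that observation in hand the proof is just the two length comparisons above, so I do not expect any real obstacle beyond this bookkeeping.
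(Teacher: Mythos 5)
Your proof is correct and is exactly the argument the paper has in mind: the paper omits the details, remarking only that the lemma is ``easy to prove since the diametrical path $P_{d+1}$ is the longest path in tree $T$,'' and your two length comparisons $i+\ell_i\le d$ and $(d-i)+\ell_i\le d$ are precisely the fleshed-out version of that remark. The observation that $T_{v_i}$ meets $P_{d+1}$ only in $v_i$ is the right bookkeeping to make the concatenations genuine paths, so nothing is missing.
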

	Immediately from Lemma \ref{lem1}, we find the symmetry property of diametrical path, as established below:
	\begin{lem}
		For all $i=1,\ldots, d-1$, $\varepsilon_3\left ( v_i \right ) =\varepsilon_3\left ( v_{d-i} \right )$.
	\end{lem}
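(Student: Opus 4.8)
The plan is to pin down $\varepsilon_3(v_i)$ up to the single quantity that really governs it, and then let the symmetry fall out of Lemma \ref{lem1}. Throughout I use that in a tree $\mathcal{F}_T(x,y,z)=\frac12\big(d(x,y)+d(y,z)+d(x,z)\big)$, i.e.\ the number of edges of the minimal subtree spanning $\{x,y,z\}$, and I write $z_c$ for a leaf of $T_{v_c}$ at distance $\ell_c$ from $v_c$ (so every vertex of $T$ lies in some $T_{v_c}$ with $0\le c\le d$, where $\ell_0=\ell_d=0$).

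First I would record that $\varepsilon_2(v_i)=\max\{i,d-i\}$: a vertex of $T_{v_c}$ at distance $u\le\ell_c$ from $v_c$ is at distance $|i-c|+u\le|i-c|+\min\{c,d-c\}\le\max\{i,d-i\}$ from $v_i$ by Lemma \ref{lem1}, with equality attained at $v_0$ or $v_d$. Hence if $i\le d/2$ the longest path from $v_i$ can be taken as $v_i v_{i+1}\cdots v_d$, so by the lemma recalled above — a vertex farthest from $u$ may be chosen as a Fermat eccentric vertex of $u$ — we may write $\varepsilon_3(v_i)=\max_{w}\mathcal{F}_T(v_i,v_d,w)=\max_{w}\frac12\big((d-i)+d(v_i,w)+d(v_d,w)\big)$. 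Taking $w$ in $T_{v_c}$ at distance $u\le\ell_c$ from $v_c$, a short computation gives $\mathcal{F}_T(v_i,v_d,w)=(d-i)+u$ when $c\ge i$, and $\mathcal{F}_T(v_i,v_d,w)=d-c+u\le d$ when $c<i$ (here Lemma \ref{lem1} is used as $\ell_c\le c$); in particular $w=v_0$ gives $d$. Therefore, for $i\le d/2$,
$$\varepsilon_3(v_i)=\max\Big\{\,d,\ (d-i)+\ell_i^{+}\,\Big\},\qquad \ell_i^{+}:=\max\{\ell_c:i\le c\le d\}.$$
Running the same argument from $v_0$ when $i\ge d/2$ gives symmetrically $\varepsilon_3(v_i)=\max\{d,\ i+\ell_i^{-}\}$ with $\ell_i^{-}:=\max\{\ell_c:0\le c\le i\}$.

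It remains to compare these at $i$ and $d-i$; assume $i\le d/2$, so $d-i\ge d/2$. Then $\varepsilon_3(v_i)=\max\{d,\ (d-i)+\max_{i\le c\le d}\ell_c\}$ while $\varepsilon_3(v_{d-i})=\max\{d,\ (d-i)+\max_{0\le c\le d-i}\ell_c\}$. For $c$ with $i\le c\le d-i$ the term $(d-i)+\ell_c$ occurs in both. For $c<i$ Lemma \ref{lem1} gives $\ell_c\le\min\{c,d-c\}=c<i$, so $(d-i)+\ell_c<(d-i)+i=d$; for $c>d-i$ it gives $\ell_c\le\min\{c,d-c\}=d-c<i$, so again $(d-i)+\ell_c<d$. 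Hence in each formula the terms indexed outside $[i,d-i]$ are strictly below $d$ and may be dropped, so both expressions equal $\max\{d,\ (d-i)+\max_{i\le c\le d-i}\ell_c\}$, and $\varepsilon_3(v_i)=\varepsilon_3(v_{d-i})$.

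The only genuine work is the ``short computation'' in the second paragraph together with the observation that, because of the bound $\ell_c\le\min\{c,d-c\}$, the configurations in which $w$ sits on the near side of $v_i$ never beat the baseline $\mathcal{F}_T(v_i,v_0,v_d)=d$ — this is exactly where Lemma \ref{lem1} is indispensable, and it is used a second time, in the same spirit, in the final comparison. Everything else is bookkeeping. As a cross-check one can reach the same closed form without the farthest-vertex lemma, by classifying an extremal triple $\{v_i,v,w\}$ by the positions relative to $v_i$ of the subtree roots carrying $v$ and $w$: the straddling case contributes at most $d$, the ``both on the $v_d$ side'' case at most $(d-i)+\ell_i^{+}$ (with $w$ replaceable by $v_d$), and the ``both on the $v_0$ side'' case at most $i+\ell_i^{-}$, which for $i\le d/2$ is $\le 2i\le d$.
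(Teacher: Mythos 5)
Your proof is correct and follows essentially the same route as the paper: both reduce to the closed form $\varepsilon_3(v_i)=(d-i)+\max\{i,\max_c \ell_c\}=\max\{d,(d-i)+\max_c\ell_c\}$ on the diametrical path and read off the symmetry under $i\leftrightarrow d-i$. The only difference is that you actually derive this formula (via the half-perimeter expression for the Fermat distance in a tree, the farthest-vertex lemma, and the truncation of the index range to $[i,d-i]$ using Lemma~\ref{lem1}), whereas the paper asserts it in a four-line computation.
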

	\begin{proof}
		Symmetrically, we only need to check that the equation holds for $i=0,\cdots,\lfloor\frac{d}{2}\rfloor$. We obtain that
		\begin{align*}
			\varepsilon_3\left ( v_i \right ) &= d\left( v_i,v_d \right)+ \max_{k=i+1,\cdots,d-i-1}\left\{ \ell_k,i  \right\}
			\\
			&=d-i  +\max_{k=i+1,\cdots,d-i-1}\left\{ \ell_k,d-i  \right\}
			\\
			&=d\left( v_{d-i},v_0\right)+\max_{k=i+1,\cdots,d-i-1}\left\{ \ell_k,d-i  \right\}
			\\
			&=\varepsilon_3\left ( v_{d-i} \right ).
		\end{align*}
	\end{proof}
	
	From Lemma \ref{lem7}, we give more precise properties on the diametrical path $P_{d+1}$ of trees.
	\begin{lem}\label{lem3}
		For  $i\in \left\{ 0,\ldots,\lfloor \frac{d}{2} \rfloor \right\}$, Fermat eccentricity satisfies
		\begin{equation*}
			\varepsilon _3\left( v_{i+1} \right) \le \varepsilon _3\left( v_i \right) \le \varepsilon _3\left( v_{i+1} \right) +1.
		\end{equation*}
		For  $i\in \left\{ \lfloor \frac{d}{2} \rfloor+1 ,\ldots,d \right\}$, Fermat eccentricity satisfies
		\begin{equation*}
			\varepsilon _3\left( v_i \right) \le \varepsilon _3\left( v_{i+1} \right) \le \varepsilon _3\left( v_i \right) +1.
		\end{equation*}
		In other words, for vertices in $P_{d+1}$, the central vertices reach the minimum Fermat eccentricity.
	\end{lem}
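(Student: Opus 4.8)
The plan is to reduce both chains of inequalities to a single computation and then read it off from the closed form for $\varepsilon_3$ that was isolated inside the proof of the previous lemma. First, by the symmetry relation $\varepsilon_3(v_i)=\varepsilon_3(v_{d-i})$ proved above, the second chain is just a restatement of the first: for an index $i$ in the second range put $j=d-i-1$, so that $\varepsilon_3(v_i)=\varepsilon_3(v_{j+1})$ and $\varepsilon_3(v_{i+1})=\varepsilon_3(v_j)$, and then $\varepsilon_3(v_i)\le\varepsilon_3(v_{i+1})\le\varepsilon_3(v_i)+1$ is exactly the first chain applied to $j$. Hence it suffices to show, for $i$ in the first range, that $0\le\varepsilon_3(v_i)-\varepsilon_3(v_{i+1})\le 1$; the upper bound here is in any case already contained in Lemma~\ref{lem7}, so the new content is only the monotonicity.

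Next I would reuse the identity derived in the proof of the previous lemma: for $0\le i\le\lfloor d/2\rfloor$,
$$\varepsilon_3(v_i)=(d-i)+M_i,\qquad M_i:=\max\Big(\{i\}\cup\{\ell_k:\ i<k<d-i\}\Big),$$
with the convention that $M_i=i$ when the index set $\{k:\ i<k<d-i\}$ is empty (so that a central vertex has Fermat eccentricity $d$). Subtracting the identity for $i+1$ from the one for $i$ gives
$$\varepsilon_3(v_i)-\varepsilon_3(v_{i+1})=1+\big(M_i-M_{i+1}\big),$$
so the entire first chain is equivalent to the pair of inequalities $M_{i+1}-1\le M_i\le M_{i+1}$.

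Both of these follow from Lemma~\ref{lem1}. For $M_i\le M_{i+1}$: every $\ell_k$ with $i+1<k<d-i-1$ already appears in the maximum defining $M_{i+1}$, and $i\le i+1\le M_{i+1}$, so the only extra terms of $M_i$ that need bounding are $\ell_{i+1}$ and $\ell_{d-i-1}$, and Lemma~\ref{lem1} gives $\ell_{i+1}\le\min\{i+1,d-i-1\}\le i+1\le M_{i+1}$ and likewise $\ell_{d-i-1}\le\min\{d-i-1,i+1\}\le i+1\le M_{i+1}$. For $M_i\ge M_{i+1}-1$: if $M_{i+1}=i+1$ then $M_i\ge i=M_{i+1}-1$; otherwise $M_{i+1}=\ell_k$ for some $k$ with $i+1<k<d-i-1$, and such a $k$ also satisfies $i<k<d-i$, so $\ell_k$ is among the terms of $M_i$ and $M_i\ge\ell_k=M_{i+1}$. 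Thus $\varepsilon_3(v_i)-\varepsilon_3(v_{i+1})\in\{0,1\}$, which proves the first chain, and together with the symmetry reduction it proves the lemma. The one spot that needs separate care is the index adjacent to the center, where the index set in the formula for $M_i$ degenerates to $\emptyset$ and the ``towards $v_0$'' and ``towards $v_d$'' branches of the Fermat eccentric tree exchange roles; there the parity of $d$ (one versus two central vertices) has to be tracked and the relation $\varepsilon_3(v_i)=\varepsilon_3(v_{d-i})$ is what pins the values down. I expect this small parity bookkeeping at the center to be the only real obstacle — everything off-center is just the elementary bound $\ell_k\le\min\{k,d-k\}$.
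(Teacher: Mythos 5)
Your argument is essentially the paper's proof: both rest on the same closed form $\varepsilon_3(v_i)=(d-i)+\max\bigl(\{i\}\cup\{\ell_k: i<k<d-i\}\bigr)$ and reduce the two inequalities to $M_{i+1}-1\le M_i\le M_{i+1}$ via Lemma~\ref{lem1}; the only cosmetic difference is that you dispatch the second chain through the symmetry lemma where the paper writes ``by an analogous discussion.'' The boundary case you flag at $i=\lfloor d/2\rfloor$ is indeed the weak point, and it is not mere parity bookkeeping: for even $d$ the identity for $\varepsilon_3(v_{i+1})$ with the long branch toward $v_d$ is no longer valid once $v_{i+1}$ lies past the centre, and the inequality $\varepsilon_3(v_{d/2+1})\le\varepsilon_3(v_{d/2})$ can genuinely fail --- take $P_5=v_0\cdots v_4$ with a pendant path of length $2$ attached at $v_2$, where $\varepsilon_3(v_3)=\varepsilon_3(v_1)=5>4=\varepsilon_3(v_2)$. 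The paper's proof silently has the same defect; the honest repair is to restrict the first range to $i\le\lceil d/2\rceil-1$ (for odd $d$ the index $i=\lfloor d/2\rfloor$ is harmless, since $v_{\lfloor d/2\rfloor}$ and $v_{\lfloor d/2\rfloor+1}$ are the two centres and the symmetry lemma forces equality there), which still yields the intended conclusion that the central vertices minimise $\varepsilon_3$ on $P_{d+1}$.
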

	\begin{proof}
		For a given $v_i$, $i=0,\ldots,\left \lfloor \frac{d}{2}  \right \rfloor$, we obtain that
		\begin{align*}
			\varepsilon_3\left ( v_{i+1} \right ) &= d\left ( v_{i+1},v_d \right ) + \max_{k=i+2,\cdots,d-i-2}\left\{ \ell_k,i+1  \right\}
			\\
			&=d\left ( v_i,v_d \right )  +\max_{k=i+2,\cdots,d-i-2}\left\{ \ell_k-1,i  \right\}
			\\
			&\le d\left ( v_i,v_d \right )  +\max_{k=i+1,\cdots,d-i-1}\left\{ \ell_k,i  \right\} = \varepsilon_3\left ( v_i \right ).
		\end{align*}
		By Lemma \ref{lem1}, for $i=0,\ldots,\left \lfloor \frac{d}{2}  \right \rfloor-1$, we deduce that
		\begin{align*}
			&\ell_{i+1},\ell_{d-i-1} \le i+1,
		\end{align*}
		and hence,
		\begin{align*}
			\varepsilon_3\left ( v_{i+1} \right ) +1&= d\left ( v_i,v_d \right ) + \max_{k=i+2,\cdots,d-i-2}\left\{ \ell_k,i+1  \right\}
			\\
			&=d\left ( v_i,v_d \right )  +\max_{k=i+1,\cdots,d-i-1}\left\{ \ell_k,i+1  \right\}
			\\
			&\ge d\left ( v_i,v_d \right )  +\max_{k=i+1,\cdots,d-i-1}\left\{ \ell_k,i  \right\} = \varepsilon_3\left ( v_i \right ).
		\end{align*}
		By an analogous discussion, it is easy to prove the second assertion of Lemma \ref{lem3}.
		
		The formulas in Lemma \ref{lem3} explain that the Fermat eccentricity of vertex $v_i$ for $i\in \left \{ 0,\ldots,d \right \} $ is diminishing from two endpoints to the center of the diametrical path $P_{d+1}$, so central vertices reach the minimum Fermat eccentricity.
	\end{proof}
	\begin{lem}\label{lem5}
		(1) For all $uv\in E\left(P_{[\ell:d-\ell]}\right)$, it satisfies $\varepsilon _3(u)=\varepsilon _3(v)$;
		
		(2) For all $uv\in E\left(P_{[0:\ell]}\cup P_{[d-\ell:d]}\right)$, it satisfies $\left| \varepsilon _3(u)-\varepsilon _3(v) \right|=1$.
	\end{lem}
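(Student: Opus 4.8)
The plan is to pin down the exact value of $\varepsilon_3(v_i)$ for every vertex of the diametrical path and then read off both statements. The starting point is the formula used at the beginning of the proof of the symmetry property $\varepsilon_3(v_i)=\varepsilon_3(v_{d-i})$, namely
$$\varepsilon_3(v_i) = d(v_i,v_d) + \max_{k=i+1,\ldots,d-i-1}\{\ell_k,\,i\} = (d-i) + \max\{i,\ \ell_{i+1},\ldots,\ell_{d-i-1}\}\qquad(0\le i\le\lfloor d/2\rfloor),$$
together with the symmetry $\varepsilon_3(v_i)=\varepsilon_3(v_{d-i})$, so that it suffices to treat $i\in\{0,\ldots,\lfloor d/2\rfloor\}$ and reflect. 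The key structural input is Lemma \ref{lem1}: since $\ell_k\le\min\{k,d-k\}$, we get $\ell\le\lfloor d/2\rfloor$, and any index $j$ with $\ell_j=\ell$ satisfies $\ell\le j\le d-\ell$.

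For part (1): if $\ell\le i\le\lfloor d/2\rfloor$, then every branch length in the maximum obeys $\ell_k\le\ell\le i$, so $\max\{i,\ell_{i+1},\ldots,\ell_{d-i-1}\}=i$ and hence $\varepsilon_3(v_i)=(d-i)+i=d$. By symmetry $\varepsilon_3(v_i)=d$ for all $\ell\le i\le d-\ell$, i.e.\ for every vertex of $P_{[\ell:d-\ell]}$; in particular the two endpoints of any edge $uv\in E(P_{[\ell:d-\ell]})$ have equal Fermat eccentricity.

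For part (2): if $0\le i\le\ell-1$, then $i<\ell\le j\le d-\ell<d-i$, so the index $j$ realising $\ell$ lies strictly in $\{i+1,\ldots,d-i-1\}$; hence $\max\{i,\ell_{i+1},\ldots,\ell_{d-i-1}\}=\ell>i$ and $\varepsilon_3(v_i)=(d-i)+\ell$. Combined with $\varepsilon_3(v_\ell)=d$ from part (1), the values along $P_{[0:\ell]}$ are $d+\ell, d+\ell-1,\ldots,d+1,d$ as $i$ runs from $0$ to $\ell$, so consecutive vertices differ by exactly $1$; reflecting yields the same on $P_{[d-\ell:d]}$, which gives $\left|\varepsilon_3(u)-\varepsilon_3(v)\right|=1$ for every edge $uv\in E(P_{[0:\ell]}\cup P_{[d-\ell:d]})$.

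The one delicate point is the claim in part (2) that the deepest branch sits strictly between $v_i$ and $v_{d-i}$ on the path — this is exactly where Lemma \ref{lem1} is indispensable — and this is also what keeps the degenerate cases harmless: when $\ell=0$ the segments $P_{[0:\ell]}$ and $P_{[d-\ell:d]}$ are single vertices with no edges, so (2) is vacuous and (1) reduces to the elementary fact that every vertex of a path $P_{d+1}$ has Fermat eccentricity $d$. Lemma \ref{lem3} already tells us that consecutive differences along $P_{d+1}$ are $0$ or $1$, so in (2) it would even be enough to exclude $0$; the computation above does this and simultaneously re-derives (1).
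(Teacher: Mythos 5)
Your proof is correct and follows essentially the same route as the paper: both arguments reduce to the explicit values $\varepsilon_3(v_i)=d$ for $\ell\le i\le d-\ell$ and $\varepsilon_3(v_i)=(d-i)+\ell$ for $i<\ell$ (with the symmetric statement on the other end), from which both claims are read off. You are somewhat more careful than the paper in justifying these values — in particular in using Lemma \ref{lem1} to place the deepest branch $T_\ell$ strictly inside the index range of the maximum when $i<\ell$ — but the underlying idea is identical.
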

	\begin{proof}
		Suppose that $u,v\in \{v_0,\ldots,v_{\lfloor \frac{d}{2} \rfloor}\}$ and $u$ is more distant from the center. Let us proceed with the discussion by considering the following two cases:
		\begin{enumerate}
			\item $d\left ( v_0,u \right ) < \ell$.
			We have 	
			\begin{gather*}
				\varepsilon _3\left ( u \right )  = d\left ( u,v_d \right ) + \ell,
				\\
				\varepsilon _3\left ( v \right )  = d\left ( v,v_d \right ) + \ell =\varepsilon_3\left( u \right) - 1.
			\end{gather*}
			\item  $d\left ( v_0,u \right ) \ge  \ell$. We have 	
			\begin{gather*}
				\varepsilon _3\left ( u \right )  = \varepsilon _3\left ( v \right )= d.
			\end{gather*}
		\end{enumerate}
		For $u,v\in \left\{v_{\lfloor \frac{d}{2} \rfloor},\ldots,v_d\right\}$, we have the similar conclusion. Above all, when $uv\in E\left(P_{[\ell:d-\ell]}\right)$, we obtain $\varepsilon _3(u)=\varepsilon _3(v)$, and when $uv\in E\left(P_{[0:\ell]}\cup P_{[d-\ell:d]}\right)$, we obtain $\left| \varepsilon _3(u)-\varepsilon _3(v) \right|=1$. 	
	\end{proof}
	Lemma \ref{lem5} divides edges of $P_{d+1}$ into two parts and is the key lemma that will be used to prove Theorem \ref{thm1}. The following lemma determines the behaviors of subtree $T_{v_i}$, $i=1,2,\ldots, d-1$.
	\begin{lem}\label{lem4}
		For any vertex $u\in T_{v_i}$, $i=1,\ldots,d-1$, it holds that
		\begin{equation}\label{eq:lemma4}
			\varepsilon_3\left( u \right) = d\left( u,v_i \right)+ \varepsilon_3\left( v_i \right).
		\end{equation}
	\end{lem}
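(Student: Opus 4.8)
The plan is to establish the two inequalities $\varepsilon_3(u)\le d(u,v_i)+\varepsilon_3(v_i)$ and $\varepsilon_3(u)\ge d(u,v_i)+\varepsilon_3(v_i)$ separately. Throughout I use the classical description of the Fermat distance in a tree: for any $a,b,c$ there is a unique median vertex $m$ lying on all three pairwise geodesics, and $\mathcal F_T(a,b,c)=d(m,a)+d(m,b)+d(m,c)=\tfrac12\big(d(a,b)+d(b,c)+d(c,a)\big)$. By the symmetry lemma $\varepsilon_3(v_i)=\varepsilon_3(v_{d-i})$, and reversing the diametrical path $P_{d+1}$ interchanges $T_{v_i}$ and $T_{v_{d-i}}$, so I may assume $i\le\lfloor d/2\rfloor$; then Lemma \ref{lem1} yields in particular $\ell_i\le i$. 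The upper bound is immediate from the triangle inequality: for any $x,y\in V(T)$ we have $d(u,x)\le d(u,v_i)+d(v_i,x)$ and $d(u,y)\le d(u,v_i)+d(v_i,y)$, so $\mathcal F(u,x,y)\le d(u,v_i)+\mathcal F(v_i,x,y)\le d(u,v_i)+\varepsilon_3(v_i)$, and maximizing over $x,y$ gives the claim.

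For the lower bound I would first record two auxiliary facts. (i) $\varepsilon_2(v_i)=d-i$: the endpoint $v_d$ gives $\varepsilon_2(v_i)\ge d-i$, while for any vertex $w$ lying in a subtree $T_{v_k}$ one has $d(v_i,w)\le|i-k|+\ell_k\le|i-k|+\min\{k,d-k\}\le d-i$ by Lemma \ref{lem1} (and $d(v_i,v_j)=|i-j|\le d-i$), so $\varepsilon_2(v_i)\le d-i$. (ii) \emph{If $x\in V(T_{v_i})$, then $\mathcal F(v_i,x,y)\le d$ for every $y\in V(T)$}: since $d(x,y)\le d(x,v_i)+d(v_i,y)$, we get $\mathcal F(v_i,x,y)\le d(v_i,x)+d(v_i,y)\le \ell_i+\varepsilon_2(v_i)\le i+(d-i)=d$, using Lemma \ref{lem1} once more. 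Combined with $\varepsilon_3(v_i)\ge \mathcal F(v_i,v_0,v_d)=d$ (the median of $\{v_0,v_i,v_d\}$ being $v_i$), fact (ii) says that either a Fermat-eccentric pair of $v_i$ can be chosen entirely outside $T_{v_i}$, or else $\varepsilon_3(v_i)=d$.

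Accordingly I split into two cases. If $\varepsilon_3(v_i)=d$, then since $v_i$ is the median of $\{u,v_0,v_d\}$ (it lies on $P_{d+1}$ and separates $u$ from $v_0$ and from $v_d$) we get $\varepsilon_3(u)\ge \mathcal F(u,v_0,v_d)=d(u,v_i)+i+(d-i)=d(u,v_i)+\varepsilon_3(v_i)$. If $\varepsilon_3(v_i)>d$, choose $x^*,y^*$ with $\mathcal F(v_i,x^*,y^*)=\varepsilon_3(v_i)$; by fact (ii) both $x^*$ and $y^*$ lie outside $T_{v_i}$, so the median $m$ of $\{v_i,x^*,y^*\}$, lying on the $v_i$--$x^*$ geodesic which leaves $T_{v_i}$ already at $v_i$, satisfies $m=v_i$ or $m\notin V(T_{v_i})$. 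Since $v_i$ then separates $u$ from each of $m$, $x^*$ and $y^*$, the same vertex $m$ is the median of $\{u,x^*,y^*\}$ and $d(u,m)=d(u,v_i)+d(v_i,m)$, whence $\varepsilon_3(u)\ge \mathcal F(u,x^*,y^*)=d(u,v_i)+\mathcal F(v_i,x^*,y^*)=d(u,v_i)+\varepsilon_3(v_i)$. Combining with the upper bound proves \eqref{eq:lemma4}. The one genuine obstacle is fact (ii): ruling out that a Fermat-eccentric pair for $v_i$ is forced to reach into the hanging subtree $T_{v_i}$ is exactly the place where the height restriction $\ell_i\le\min\{i,d-i\}$ of Lemma \ref{lem1} is essential.
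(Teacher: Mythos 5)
Your proof is correct, and it takes a more explicit route than the paper's. The paper's own argument is a one\-/step computation: citing Lemma \ref{lem1}, it asserts that the Fermat eccentric vertices of any $u\in T_{v_i}$ may be taken to be $v_d$ together with either $v_0$ or a deepest vertex of $T_\ell$, and then reads off $\varepsilon_3(u)=d(u,v_d)+\max_t\{\ell_t,i\}=d(u,v_i)+\varepsilon_3(v_i)$ from the formula for $\varepsilon_3$ along the diametrical path used in the preceding lemmas; the location of the eccentric vertices is declared ``obvious.'' You instead prove the two inequalities separately: the upper bound is just the triangle inequality at $v_i$ (and holds in any graph), while the lower bound rests on your fact (ii), that $\mathcal{F}(v_i,x,y)\le d$ whenever $x\in V(T_{v_i})$, which combined with $\mathcal{F}(v_i,v_0,v_d)=d$ yields a clean dichotomy: either $\varepsilon_3(v_i)=d$ and the pair $\{v_0,v_d\}$ works for $u$ with median $v_i$, or a Fermat\-/eccentric pair of $v_i$ avoids $T_{v_i}$ entirely and is reached from $u$ through $v_i$. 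This buys two things: it isolates precisely where the height bound $\ell_i\le\min\{i,d-i\}$ of Lemma \ref{lem1} enters, and it avoids committing to a specific eccentric pair, replacing the paper's unproved assertion with a verifiable case split. The one ingredient you use that the paper does not make explicit is the tree median formula $\mathcal{F}(a,b,c)=\tfrac12\left(d(a,b)+d(b,c)+d(c,a)\right)$, but that is standard and valid in the acyclic setting of this section.
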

	\begin{proof}
		By Lemma \ref{lem1}, for any vertex $u \in T_{v_i}$, $i=1,\ldots,\lfloor \frac{d}{2} \rfloor$, it is obvious that one eccentric vertex of $u$ is $v_d$, and another one can be chosen as $v_0$ when $\ell_i=\ell$ and in subtree $T_{\ell}$ when $\ell>\ell_i$. In other words, we have
		\begin{align*}
			\varepsilon_3\left( u \right) = d\left( u,v_d \right)+\max_{t=i+1,\cdots,d-i-1}\left\{  \ell_t,i\right\}=d\left( u,v_i\right)+\varepsilon_3\left( v_i \right).
		\end{align*}
		Similarly, Eq. \eqref{eq:lemma4} holds for $i=\lceil \frac{d}{2} \rceil,\ldots,d-1 $.
	\end{proof}
	
	\begin{thm}\label{thm1}
		For any acyclic graph $T$, we have
		\begin{equation}\label{eq:thm1}
			\frac{\sum_{uv\in E\left( T\right)}{\varepsilon _3\left( u \right) \varepsilon _3\left( v \right)}}{m}\le \frac{\sum_{u\in V\left( T \right)}{\varepsilon _3^2\left( u \right)}}{n},
		\end{equation}
		where the equation holds if and only if $T\cong P_n$.
	\end{thm}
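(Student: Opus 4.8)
First note that, since $T$ is a tree, $m=n-1$, so \eqref{eq:thm1} is equivalent to $D:=(n-1)F_1(T)-n\,F_2(T)\ge 0$; the cases $n\le 2$ being trivial, assume $d:=\operatorname{diam}(T)\ge 2$ and fix a diametrical path $P_{d+1}=v_0\cdots v_d$ through all central vertices, together with the subtrees $T_{v_i}$ and the number $\ell$ of Section 3. The plan starts by combining Lemmas \ref{lem5} and \ref{lem4} into the following edge dichotomy: every $uv\in E(T)$ is either \emph{flat}, i.e.\ $\varepsilon_3(u)=\varepsilon_3(v)$ --- these being exactly the $d-2\ell$ edges of $P_{[\ell:d-\ell]}$, on each of which the common value is $d$ --- or \emph{unit}, i.e.\ $|\varepsilon_3(u)-\varepsilon_3(v)|=1$ (all edges of the subtrees, and of $P_{[0:\ell]}\cup P_{[d-\ell:d]}$, are unit by Lemmas \ref{lem4} and \ref{lem5}). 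For a flat edge $\varepsilon_3(u)\varepsilon_3(v)=\tfrac12(\varepsilon_3^2(u)+\varepsilon_3^2(v))$ and for a unit edge $\varepsilon_3(u)\varepsilon_3(v)=\tfrac12(\varepsilon_3^2(u)+\varepsilon_3^2(v)-1)$; summing over $E(T)$ gives
\[
F_2(T)\;=\;\tfrac12\sum_{u\in V(T)}\deg_T(u)\,\varepsilon_3^2(u)\;-\;\tfrac{s}{2},\qquad s:=m-(d-2\ell)=n-1-d+2\ell,
\]
where $s\ge 0$ is the number of unit edges.

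Substituting this into $D$ and using the tree identity $\sum_{u\in V(T)}\bigl(\tfrac{n\deg_T(u)}{2}-(n-1)\bigr)=nm-n(n-1)=0$ (which lets one subtract any constant from $\varepsilon_3^2(u)$, and in particular $d^2$), one obtains
\[
D\;=\;\frac{ns}{2}\;-\;\sum_{u\in V(T)}c_u\bigl(\varepsilon_3^2(u)-d^2\bigr),\qquad c_u:=\frac{n\,\deg_T(u)}{2}-(n-1).
\]
Here $s=0$ holds precisely when $T\cong P_n$: all edges flat forces $\ell=0$ and no hanging subtrees. Hence, once we establish the key estimate
\begin{equation}\label{eq:key}
\sum_{u\in V(T)}\bigl(2(n-1)-n\deg_T(u)\bigr)\bigl(\varepsilon_3^2(u)-d^2\bigr)\;\ge\;0,
\end{equation}
it follows that $D\ge \tfrac{ns}{2}\ge 0$, with equality only if $s=0$, i.e.\ $T\cong P_n$; and $T\cong P_n$ does give equality since then $\varepsilon_3\equiv d$. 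By Lemma \ref{lem4} and the symmetry lemma, $\varepsilon_3(u)\ge d$ for all $u$, so each factor $\varepsilon_3^2(u)-d^2$ in \eqref{eq:key} is nonnegative; moreover, by Lemmas \ref{lem3}--\ref{lem4}, $\varepsilon_3$ (hence $\varepsilon_3^2-d^2$) is non-decreasing along every path issuing from a central vertex, and vanishes at the central vertices.

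It remains to prove \eqref{eq:key}. The weight $2(n-1)-n\deg_T(u)$ equals $n-2>0$ at every leaf (the ``outermost'' vertices, where $\varepsilon_3^2-d^2$ is largest) and is $\le -2$ at every internal vertex, while at a \emph{central} vertex $\varepsilon_3^2-d^2=0$; and the only internal vertices with $\varepsilon_3^2(u)-d^2>0$ lie on the end-segments $P_{[0:\ell]}$, $P_{[d-\ell:d]}$ or inside the subtrees $T_{v_i}$, which by Lemma \ref{lem1} are shallow (depth $\le\min\{i,d-i\}$). Concretely I would root $T$ at a central vertex $c$, rewrite \eqref{eq:key} as
$\sum_{u\ \mathrm{internal},\,u\neq c}\bigl(n\,\mathrm{ch}(u)-(n-2)\bigr)\bigl(\varepsilon_3^2(u)-d^2\bigr)\le(n-2)\sum_{w\ \mathrm{leaf}}\bigl(\varepsilon_3^2(w)-d^2\bigr)$
(where $\mathrm{ch}(u)\ge 1$ is the number of children and the term at $c$ drops since $\varepsilon_3^2(c)-d^2=0$), and prove it either (i) by a discharging argument that routes the ``deficit'' $n\,\mathrm{ch}(u)-(n-2)$ of each internal vertex with $\varepsilon_3>d$ down to leaves in the subtrees below it --- using that $\varepsilon_3^2-d^2$ does not decrease away from $c$, so rerouting to a descendant only increases the left side, and that Lemma \ref{lem1} makes each relevant subtree shallow enough for the per-leaf budget $n-2$ to suffice --- or (ii) by direct computation, substituting the closed forms $\varepsilon_3(v_i)=d+\max\{0,\ell-\min(i,d-i)\}$ (implicit in the proof of the symmetry lemma, via Lemmas \ref{lem1} and \ref{lem3}) and $\varepsilon_3(u)=d(u,v_i)+\varepsilon_3(v_i)$ for $u\in T_{v_i}$, splitting $\sum_u c_u(\varepsilon_3^2(u)-d^2)$ over $P_{[0:\ell]}$, $P_{[\ell:d-\ell]}$, $P_{[d-\ell:d]}$ and the subtrees, and checking the resulting polynomial inequality in $d$, $\ell$ and the subtree size/depth data. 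I expect \eqref{eq:key} to be the main obstacle: in route (i) one must verify the transportation feasibility carefully, because a high-degree ``hub'' $v_i$ near an end of the path may itself have $\varepsilon_3>d$ and large deficit, and one needs Lemma \ref{lem1} to see its deficit is absorbed by the (fewer but shallow) leaves below it without overrunning the budget; in route (ii) the algebra, though elementary, is long and must be arranged to make the sign transparent. Everything else --- the edge dichotomy, the cancellation producing $D=\tfrac{ns}{2}-\sum_u c_u(\varepsilon_3^2(u)-d^2)$, and the identification of the equality case with $P_n$ --- is routine given the structural lemmas.
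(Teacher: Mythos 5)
Your reduction is clean and correct: the flat/unit edge dichotomy (justified by Lemmas \ref{lem4} and \ref{lem5}), the identity $F_2(T)=\tfrac12\sum_u\deg_T(u)\varepsilon_3^2(u)-\tfrac{s}{2}$, and the cancellation $\sum_u c_u=0$ all check out, and they correctly reduce the theorem to your inequality \eqref{eq:key} together with the observation that $s=0$ iff $T\cong P_n$. This is a genuinely different route from the paper, which argues edge-by-edge (subtree edges contribute $-\varepsilon_3(u)$ each; path edges are handled by the flat/unit split and the bound $\varepsilon_3(u)\le n-1$), whereas you aggregate everything into a single Chebyshev-type ``negative correlation'' statement between degree and $\varepsilon_3^2$.

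The gap is that \eqref{eq:key} is never proved. This cannot be waved through: since the left side of \eqref{eq:key} equals $2D-ns$, your key estimate is exactly the assertion $D\ge \tfrac{ns}{2}$, which is \emph{strictly stronger} than the theorem ($D\ge 0$) whenever $s>0$; equivalently it says $\tfrac{1}{2m}\sum_u\deg_T(u)\varepsilon_3^2(u)\le\tfrac{1}{n}\sum_u\varepsilon_3^2(u)$, and the naive bound obtained from the monotonicity of $\varepsilon_3$ along root-to-leaf paths (namely $\sum_{uv\in E}(\varepsilon_3^2(u)+\varepsilon_3^2(v))\le 2\sum_{u\ne c}\varepsilon_3^2(u)$) is too weak to give it. You name two possible strategies and yourself identify the ``transportation feasibility'' of route (i) as the main difficulty, but that feasibility check is precisely the mathematical content of the theorem in your formulation, so as written the proof is incomplete. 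For what it is worth, the estimate does appear to be true and your route (i) can be made to work: writing the covariance as $\sum_u(\deg_T(u)-2)(\varepsilon_3^2(u)-\bar g)$ with $\bar g=\tfrac1n\sum_u\varepsilon_3^2(u)$, one can injectively assign to each branch vertex $u$ (rooted at a central vertex) one leaf from each of its $\deg_T(u)-2$ extra pendant branches, reserving $v_0$ and $v_d$; monotonicity of $\varepsilon_3$ away from the center gives $\varepsilon_3(w)\ge\varepsilon_3(u)$ for each assigned leaf $w$, and the two reserved leaves satisfy $\varepsilon_3(v_0)=d+\ell=\max_u\varepsilon_3(u)\ge\bar g^{1/2}$. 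But none of this is in your submission, so the argument as it stands does not establish the theorem.
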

	\begin{proof}
		In this proof, we always assume that $u$ is further from the center than $v$ for any given edge $uv\in E(T)$. Whether the selected edge $uv$ is on the diametrical path $P_{d+1}$ leads to two cases as follows.
		\begin{enumerate}
			\item Edge $uv\in E\left( T_{v_k}\right)$, $k = 1,2,\ldots,d-1$.
			By Lemma \ref{lem4}, we have $\varepsilon _3\left( u \right) -\varepsilon _3\left( v \right) =1$ and thus
			\begin{align*}
				&\quad\frac{\sum_{uv\in E\left( T_{v_k} \right)}{\varepsilon _3\left( u \right) \varepsilon _3\left( v \right)}}{m}-\frac{\sum_{u\in V\left( T_{v_k}\setminus \{v_k\} \right)}{\varepsilon _{3}^{2}\left( u \right)}}{n}
				\\
				&=\frac{n\sum_{uv\in E\left( T_{v_k} \right)}{\varepsilon _3\left( u \right) \varepsilon _3\left( v \right)}-\left( n-1 \right) \sum_{u\in V\left( T_{v_k}\setminus \{v_k\} \right)}{\varepsilon _{3}^{2}\left( u \right)}}{n\left( n-1 \right)}
				\\
				&=\frac{n\sum_{u\in V\left( T_{v_k}\setminus \{v_k\} \right)}{\varepsilon _3\left( u \right) \left( \varepsilon _3\left( u \right) -1 \right)}-\left( n-1 \right) \sum_{u\in V\left( T_{v_k}\setminus \{v_k\} \right)}{\varepsilon _{3}^{2}\left( u \right)}}{n\left( n-1 \right)}\\
				&=\frac{\sum_{u\in V\left( T_{v_k}\setminus \{v_k\} \right)}{\varepsilon _3\left( u \right) \left( \varepsilon _3\left( u \right) -n \right)}}{n\left( n-1 \right)}<0.
			\end{align*}
			Hence,
			\begin{equation}
				\frac{\sum_{uv\in E\left( T_{v_k} \right)}{\varepsilon _3\left( u\right) \varepsilon _3\left( v \right)}}{m}\le \frac{\sum_{u\in V\left( T_{v_k}\setminus \{v_k\}\right)}{\varepsilon _3^2\left( u \right)}}{n}
			\end{equation}
			
			\item Edge $uv\in E\left( P_{d+1} \right)$. By Lemma \ref{lem5}, the difference can be rewritten by
			\begin{align*}
				L&=\frac{\sum_{uv\in E\left( P_{d+1} \right)}{\varepsilon _3\left( u \right) \varepsilon _3\left( v \right)}}{m}-\frac{\sum_{u\in V\left( P_{d+1} \right)}{\varepsilon _{3}^{2}\left( u \right)}}{n}
				\\
				&=\frac{n\sum_{uv\in E\left( P_{d+1} \right)}{\varepsilon _3\left( u \right) \varepsilon _3\left( v \right)}-\left( n-1 \right) \sum_{u\in V\left(P_{d+1} \right)}{\varepsilon _{3}^{2}\left( u \right)}}{n\left( n-1 \right)}
				\\
				&=\frac{n\sum_{uv\in E\left( P_{[\ell:d-\ell]} \right)}{\varepsilon _3\left( u \right) \varepsilon _3\left( v \right)}-\left( n-1 \right) \sum_{u\in V\left( P_{[\ell:d-\ell]} \right)}{\varepsilon _{3}^{2}\left( u \right)}}{n\left( n-1 \right)}
				\\
				&\quad+\frac{n\sum_{uv\in E\left(P_{[0:\ell]}\cup P_{[d-\ell:d]} \right)}{\varepsilon _3\left( u \right) \varepsilon _3\left( v \right)}-\left( n-1 \right) \sum_{u\in V\left( P_{[0:\ell]}\cup P_{[d-\ell:d]}\setminus\left\{v_\ell,v_{d-\ell}\right\} \right)}{\varepsilon _{3}^{2}\left( u \right)}}{n\left( n-1 \right)}.
			\end{align*}
			We consider the following two subcases. Notice that $\varepsilon_{3}(u)\leq n-1$ and $d+1\leq n$ always hold.
			\begin{enumerate}
				\item Acyclic graph $G$ has only one central vertex $c$.
				It derives that
				\begin{equation}\label{eq.2(a)}
					\begin{split}
						L&=\frac{n\sum_{u\in V\left( P_{[\ell:d-\ell]}\right) \setminus \left\{ c \right\}}{\varepsilon _{3}^{2}\left( u \right)}-\left( n-1 \right) \sum_{u\in V\left( P_{[\ell:d-\ell]}\right)}{\varepsilon _{3}^{2}\left( u \right)}}{n\left( n-1 \right)}
						\\
						&\quad +\frac{\splitdfrac{n\sum_{u\in V\left( P_{[0:\ell]}\cup P_{[d-\ell:d]}\setminus\left\{v_\ell,v_{d-\ell}\right\} \right)}{\varepsilon _3\left( u \right) \left( \varepsilon _3\left( u \right) -1 \right)}}{-\left( n-1 \right) \sum_{u\in V\left( P_{[0:\ell]}\cup P_{[d-\ell:d]}\setminus\left\{v_\ell,v_{d-\ell}\right\}\right)}{\varepsilon _{3}^{2}\left( u \right)}}}{n\left( n-1 \right)}
						\\
						& =\frac{\splitdfrac{\sum_{u\in V\left( P_{[\ell:d-\ell]} \right)}{\varepsilon _{3}^{2}\left( u \right)}-n\varepsilon _{3}^{2}\left( c \right) +\sum_{u\in V\left( P_{[0:\ell]}\cup P_{[d-\ell:d]}\setminus\left\{v_\ell,v_{d-\ell}\right\} \right)}{\varepsilon _{3}^{2}\left( u \right)}}{-n\sum_{u\in V\left( P_{[0:\ell]}\cup P_{[d-\ell:d]}\setminus\left\{v_\ell,v_{d-\ell}\right\} \right)}{\varepsilon _3\left( u \right)}}}{n\left( n-1 \right)}
						\\
						& =\frac{(d-2l+1)d^2-nd^2+\sum_{u\in V\left(P_{[0:\ell]}\cup P_{[d-\ell:d]}\setminus\left\{v_\ell,v_{d-\ell}\right\} \right)}{\varepsilon _3\left( u \right) \left( \varepsilon _3\left( u \right) -n \right)}}{n\left( n-1 \right)}\\
						&\le 0.
					\end{split}
				\end{equation}
				Hence,
				\begin{equation*}
					\frac{\sum_{uv\in E\left( P_{d+1} \right)}{\varepsilon _3\left( u \right) \varepsilon _3\left( v \right)}}{m}\le \frac{\sum_{u\in V\left( P_{d+1} \right)}{\varepsilon _3^2\left( u \right)}}{n}.
				\end{equation*}
				\item Acyclic graph $G$ has two central vertices $c_1$, $c_2$. We see that
				\begin{equation}\label{eq.2(b)}
					\begin{split}
						L&=\frac{n\sum_{u\in V\left( P_{[\ell:d-\ell]} \right) \setminus \left\{ c_1 \right\}}{\varepsilon _{3}^{2}\left( u \right)}-\left( n-1 \right) \sum_{u\in V\left( P_{[\ell:d-\ell]} \right)}{\varepsilon _{3}^{2}\left( u \right)}}{n\left( n-1 \right)}
						\\
						&\quad+\frac{\splitdfrac{n\sum_{u\in V\left(P_{[0:\ell]}\cup P_{[d-\ell:d]}\setminus\left\{v_\ell,v_{d-\ell}\right\} \right)}{\varepsilon _3\left( u \right) \left( \varepsilon _3\left( u \right) -1 \right)}}{-\left( n-1 \right) \sum_{u\in V\left( P_{[0:\ell]}\cup P_{[d-\ell:d]}\setminus\left\{v_\ell,v_{d-\ell}\right\} \right)}{\varepsilon _{3}^{2}\left( u \right)}}}{n\left( n-1 \right)}
						\\
						&=\frac{\splitdfrac{\sum_{u\in V\left( P_{[\ell:d-\ell]} \right)}{\varepsilon _{3}^{2}\left( u \right)}-n\varepsilon _{3}^{2}\left( c_1 \right)+\sum_{u\in V\left( P_{[0:\ell]}\cup P_{[d-\ell:d]}\setminus\left\{v_\ell,v_{d-\ell}\right\} \right)}{\varepsilon _{3}^{2}\left( u \right)}}{-n\sum_{u\in V\left(P_{[0:\ell]}\cup P_{[d-\ell:d]}\setminus\left\{v_\ell,v_{d-\ell}\right\} \right)}{\varepsilon _3\left( u \right)}}}{n\left( n-1 \right)}
						\\
						&=\frac{(d-2l+1)d^2-nd^2+\sum_{u\in V\left( P_{[0:\ell]}\cup P_{[d-\ell:d]}\setminus\left\{v_\ell,v_{d-\ell}\right\} \right)}{\varepsilon _3\left( u \right) \left( \varepsilon _3\left( u \right) -n \right)}}{n\left( n-1 \right)}\\
						&\le 0.
					\end{split}
				\end{equation}
				Hence,
				\begin{equation*}
					\frac{\sum_{uv\in E\left( P_{d+1}\right)}{\varepsilon _3\left( u \right) \varepsilon _3\left( v \right)}}{m}\le \frac{\sum_{u\in V\left( P_{d+1} \right)}{\varepsilon _3^2\left( u \right)}}{n}.
				\end{equation*}
			\end{enumerate}
		\end{enumerate}
		Combining above two cases together yields
		\begin{equation*}
			\frac{\sum_{uv\in E\left( T\right)}{\varepsilon _3\left( u \right) \varepsilon _3\left( v \right)}}{m}\le \frac{\sum_{u\in V\left( T \right)}{\varepsilon _3^2\left( u \right)}}{n}.
		\end{equation*}
		Suppose that $T\cong P_n$. We thus have $T_{v_k}=\{v_k\}$, $k=1,2,\ldots,d-1$. Consequently,
		\begin{equation*}
			\frac{\sum_{uv\in E\left( T \right)}{\varepsilon _3\left( u \right) \varepsilon _3\left( v \right)}}{n-1}-\frac{\sum_{u\in V\left( T \right)}{\varepsilon _{3}^{2}\left( u \right)}}{n}=\frac{\left( n-1 \right) d^2}{n-1}-\frac{nd^2}{n}=0.
		\end{equation*}
		On the other hand, if \eqref{eq:thm1} is an equation, then \eqref{eq.2(a)} and \eqref{eq.2(b)} must be equations. This means $(d-2\ell+1)d^2-nd^2=0$, and  we thus conclude $\ell=0$, which implies that $T$ is a path.
	\end{proof}
	Theorem \ref{thm1} partly reflects the fact that $F_1(T)$ and $F_2(T)$ may share some extremal bounds. We thus give the following theorem.
	\begin{thm}
		Suppose $T$ is a $n$-vertex tree, where $n\geq 3$. Then we have
		\begin{equation}\label{eq:thm21}
			F_1(K_{1,n-1})\leq F_1(T)\leq F_1(P_n)
		\end{equation}
		and
		\begin{equation}\label{eq:thm22}
			F_2(K_{1,n-1})\leq F_2(T)\leq F_2(P_n).
		\end{equation}
	\end{thm}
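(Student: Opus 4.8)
The plan is to reduce both chains to a single pointwise estimate: for every tree $T$ on $n\ge 2$ vertices and every vertex $u$,
\begin{equation}\label{eq:pinch}
\operatorname{diam}(T)\le \varepsilon _3(u)\le n-1 .
\end{equation}
Given \eqref{eq:pinch}, the two double inequalities follow by summing over $V(T)$, resp. over $E(T)$, once we record the values of $F_1$ and $F_2$ at $K_{1,n-1}$ and $P_n$. Since $K_{1,2}\cong P_3$ is the only tree on three vertices, the case $n=3$ is trivial (all four bounds are equalities), so below we assume $n\ge 4$.

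To establish \eqref{eq:pinch}, recall that in a tree $\mathcal{F}(u,v,w)$ equals the number of edges of the Fermat tree of $\{u,v,w\}$. For the upper bound, this tree is a connected subgraph of $T$, hence has at most $n-1$ edges, so $\varepsilon _3(u)\le n-1$. For the lower bound, fix a diametrical path $v_0v_1\cdots v_d$ with $d=\operatorname{diam}(T)$: if $u\notin\{v_0,v_d\}$ then the Fermat tree of $\{u,v_0,v_d\}$ contains the whole path from $v_0$ to $v_d$, so $\varepsilon _3(u)\ge \mathcal{F}(u,v_0,v_d)\ge d$, while if $u\in\{v_0,v_d\}$ we pick any third vertex $x$ (which exists because $n\ge 3$) and note that the Fermat tree of $\{v_0,v_d,x\}$ still contains the path from $v_0$ to $v_d$. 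Hence $\varepsilon _3(u)\ge d$ in all cases.

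For the upper bounds in the theorem, note that for $P_n$ we have $\ell=0$, so, as in the proof of Theorem \ref{thm1}, $\varepsilon _3(v_i)=n-1$ for every $i$; therefore $F_1(P_n)=n(n-1)^2$ and $F_2(P_n)=(n-1)^3$. Combining the upper estimate of \eqref{eq:pinch} with the fact that a tree has exactly $n-1$ edges gives $F_1(T)=\sum_{u\in V(T)}\varepsilon _3^2(u)\le n(n-1)^2=F_1(P_n)$ and $F_2(T)=\sum_{uv\in E(T)}\varepsilon _3(u)\varepsilon _3(v)\le (n-1)(n-1)^2=F_2(P_n)$.

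For the lower bounds, first evaluate the star: the center $c$ of $K_{1,n-1}$ has $\varepsilon _3(c)=2$ (the Fermat tree of $c$ with two leaves consists of two edges), and each leaf has $\varepsilon _3=3$ (the Fermat tree of three leaves has three edges; here $n\ge 4$ is used), so $F_1(K_{1,n-1})=4+9(n-1)=9n-5$ and $F_2(K_{1,n-1})=6(n-1)$. Now let $T$ be any $n$-vertex tree. If $\operatorname{diam}(T)\ge 3$, then the lower estimate of \eqref{eq:pinch} gives $F_1(T)\ge 9n>9n-5$ and $F_2(T)\ge 9(n-1)>6(n-1)$; if $\operatorname{diam}(T)=2$, then $T\cong K_{1,n-1}$ and equality holds; and $\operatorname{diam}(T)\le 1$ is impossible for $n\ge 3$. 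This gives $F_1(K_{1,n-1})\le F_1(T)$ and $F_2(K_{1,n-1})\le F_2(T)$, finishing the proof. There is no genuine obstacle here; the only spots needing care are the lower bound $\varepsilon _3(u)\ge\operatorname{diam}(T)$ when $u$ is an endpoint of the diametrical path (handled by adding a third vertex) and the evaluation of $\varepsilon _3$ on the star, which fails for $n=3$ and must be separated out as a trivial case.
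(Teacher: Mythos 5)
Your proposal is correct, and for the lower bounds it takes a genuinely different route from the paper. For the upper bounds the two arguments coincide in substance: both rest on the pointwise estimate $\varepsilon_3(u)\le n-1$ (the Fermat tree is a subtree of $T$) together with the observation that every vertex of $P_n$ attains this value, so that $F_1(P_n)=n(n-1)^2$ and $F_2(P_n)=(n-1)^3$ dominate $F_1(T)$ and $F_2(T)$ termwise. For the lower bounds, however, the paper argues by an iterative graph transformation: it detaches the end vertex $v_d$ of a diametrical path and reattaches it to a vertex of maximal degree, claims that $\varepsilon_3$ does not increase at any vertex under this operation (comparing ``vertices and their transformations in pairs''), and repeats until the star is reached. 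You instead compute $F_1(K_{1,n-1})=9n-5$ and $F_2(K_{1,n-1})=6(n-1)$ explicitly and combine this with the pointwise bound $\varepsilon_3(u)\ge\operatorname{diam}(T)\ge 3$ valid for every non-star tree, which immediately gives $F_1(T)\ge 9n$ and $F_2(T)\ge 9(n-1)$. Your argument is shorter, entirely elementary, and avoids the delicate monotonicity claim on which the paper's transformation step relies (that claim is stated rather than verified in the paper); the price is that it is specific to the star as extremal graph and yields no information about the second-smallest trees, whereas the paper's shifting operation is in principle reusable for finer ordering results. Your handling of the boundary cases --- separating out $n=3$ where $K_{1,2}\cong P_3$, and supplying a third vertex when bounding $\varepsilon_3$ at an endpoint of the diametrical path --- is sound and closes the only places where the direct approach could have leaked.
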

	\begin{proof}
		We first check the upper bounds of \eqref{eq:thm21} and \eqref{eq:thm22}. If $T\cong P_n$, then $\varepsilon_3(u)$ reach its maximum value $n-1$ for all $u\in V(T)$ and thus the right sides of \eqref{eq:thm21} and \eqref{eq:thm22} hold.
		
		For the lower bound, we claim that the left sides of \eqref{eq:thm21} and \eqref{eq:thm22} hold if the diameter of $T$ is $2$, equivalently, the maximal degree of $T$ is $n-2$.  Suppose that the diameter of $T$ is $d$ and $P_{d+1}=v_0v_1\cdots v_d$ is the diametrical path. Note that $v_0$ and $v_d$ are leaves. Then the vertices with maximal degree must be internal vertices. We choose one maximal degree vertex $v_i$ of $P_d$ and transform $T$ into another tree $T'$ by deleting edge $v_{d-1}v_d$ and attach $v_d$ to$v_i$ by an edge. From the definition of diameter and Lemma \ref{lem4}, we obtain $\varepsilon_{3}(u;T)\geq\varepsilon_{3}(u;T')$ for all vertices if we consider vertices and their transformations in pairs.
		We thus get
		$$F_1(T')-F_1(T)=\sum_{u\in V(T')}\varepsilon_{3}^2(u)-\sum_{u\in V(T)}\varepsilon_{3}^2(u)\leq 0$$
		and
		$$F_2(T')-F_2(T)=\sum_{uv\in E(T')}\varepsilon_{3}(u)\varepsilon_{3}(v)-\sum_{uv\in E(T)}\varepsilon_{3}(u)\varepsilon_{3}(v)\leq 0.$$
		We can repeat this transformation a sufficient number of times until arrive at a tree with maximal degree $n-2$, that is, we arrive at star $K_{1,n-1}$, which is the only graph with the smallest diameter $2$. Recall that the transformation does not increase the two Fermat-Zagreb indices. Therefore, we have $F_i(K_{1,n-1})\leq F_i(T)$, $i=1,2$.
	\end{proof}

\section{Unicyclic Graphs}
As shown in Fig. \ref{Fig3}, unicyclic graphs $G$ is consisting of the unique cycle $C_g$ with $g$ vertices and the maximum subtree $T_{x_i}$  with root vertex $x_i$ on $C_g$, where $i=1,\ldots ,g$.
\begin{figure}[bpth!]
		\centering
		\includegraphics[width=0.4\textwidth]{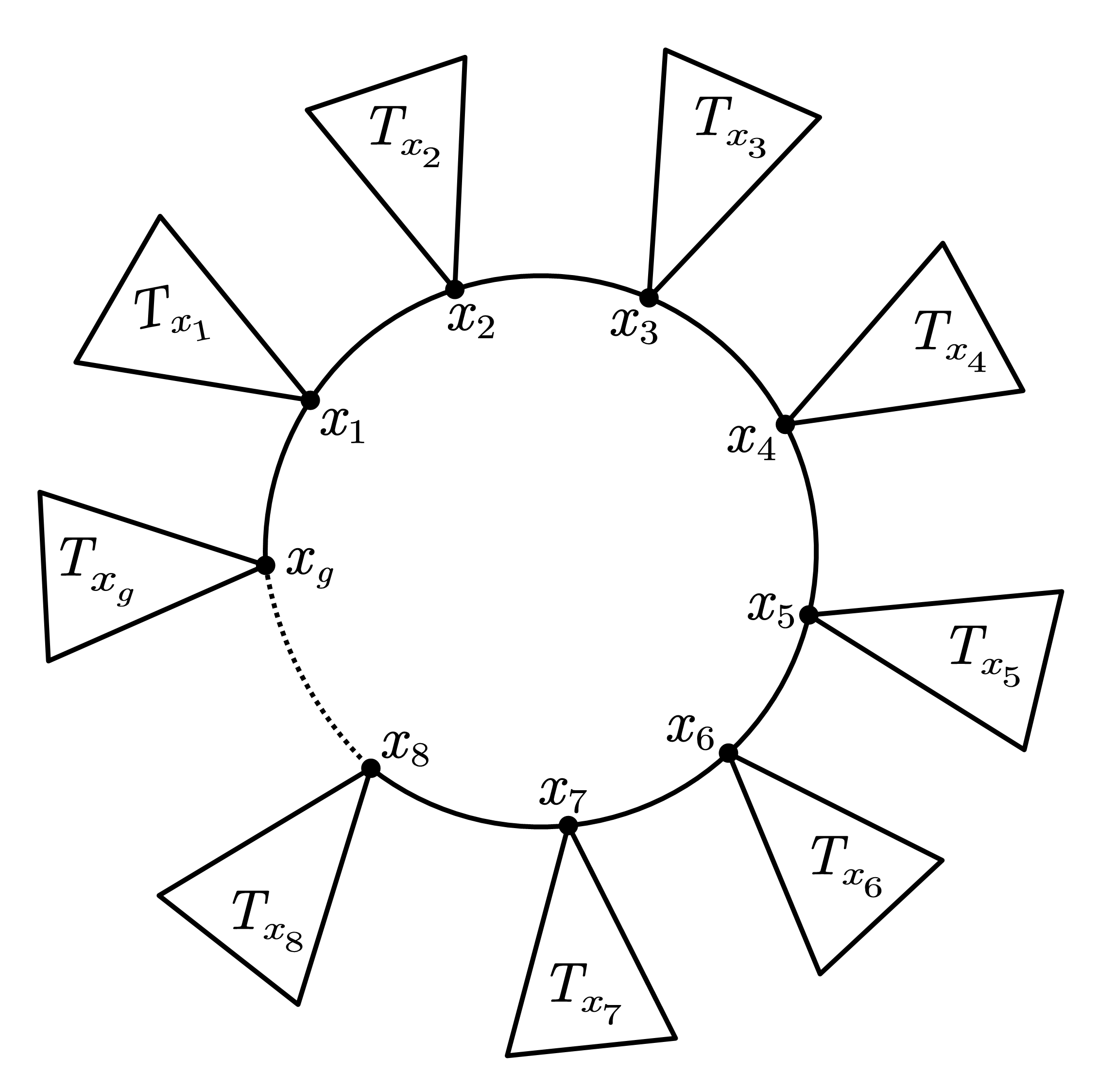}\\
		\caption{Unicyclic graph $G$.}\label{Fig3}
	\end{figure}
The following key lemma is from \cite{indices2010note}.
	\begin{lem}\label{lem6}
		Let $n\ge2$ and $x_1, \ldots, x_n$ $ $be positive integers such that $\left| x_i-x_{i+1}\right|\le1$ for each $i = 1,\ldots,n$, where $x_1=x_{n+1}$. Then $\sum_{i=1}^{n}{x_{i}^{2} }\ge \sum_{i=1}^{n}{x_ix_{i+1}}$.
	\end{lem}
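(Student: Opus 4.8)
The plan is to reduce the claim to the elementary fact that a sum of squares is nonnegative. In fact the inequality holds for \emph{arbitrary} real numbers $x_1,\dots,x_n$ subject only to the cyclic convention $x_{n+1}=x_1$; the hypotheses that the $x_i$ be positive integers with $|x_i-x_{i+1}|\le 1$ are not needed for the inequality itself and would enter only if one also wanted to pin down the equality case.

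First I would exploit the cyclic convention $x_{n+1}=x_1$ to observe that reindexing the sum over the cyclic shift gives $\sum_{i=1}^n x_{i+1}^2=\sum_{i=1}^n x_i^2$. Next I would expand the nonnegative quantity
\begin{equation*}
0\le \sum_{i=1}^n\left( x_i-x_{i+1}\right)^2=\sum_{i=1}^n x_i^2-2\sum_{i=1}^n x_ix_{i+1}+\sum_{i=1}^n x_{i+1}^2=2\sum_{i=1}^n x_i^2-2\sum_{i=1}^n x_ix_{i+1},
\end{equation*}
and dividing by $2$ yields $\sum_{i=1}^n x_i^2\ge\sum_{i=1}^n x_ix_{i+1}$, which is exactly the assertion. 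An equivalent route is to sum the $n$ AM--GM inequalities $x_i^2+x_{i+1}^2\ge 2x_ix_{i+1}$ for $i=1,\dots,n$ and again invoke the cyclic reindexing $\sum_{i=1}^n x_{i+1}^2=\sum_{i=1}^n x_i^2$.

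The only point that requires a little care is the cyclic reindexing of the shifted sum; beyond that there is no genuine obstacle. If the equality characterization is desired, note that equality forces $x_i=x_{i+1}$ for every $i$, hence $x_1=\cdots=x_n$ — this is the one place where positivity (or integrality) could be brought to bear — but since the statement as phrased only asks for the inequality, the sum-of-squares identity above suffices on its own.
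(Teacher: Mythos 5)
Your proof is correct. Note that the paper itself does not prove this lemma at all --- it is imported verbatim from the cited reference of Vuki\v{c}evi\'c and Graovac --- so there is no in-paper argument to compare against. Your sum-of-squares identity
\begin{equation*}
0\le \sum_{i=1}^n\left( x_i-x_{i+1}\right)^2=2\sum_{i=1}^n x_i^2-2\sum_{i=1}^n x_ix_{i+1}
\end{equation*}
(together with the cyclic reindexing $\sum_{i=1}^n x_{i+1}^2=\sum_{i=1}^n x_i^2$, which you handle correctly) is a complete, self-contained proof, and your observation that the hypotheses of positivity, integrality, and $|x_i-x_{i+1}|\le 1$ are superfluous for the inequality is accurate: those hypotheses merely describe the situation in which the lemma is applied (Fermat eccentricities around the cycle $C_g$, constrained by Lemma \ref{lem7}). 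This is a small but genuine improvement over simply citing the lemma, since it makes the paper's unicyclic argument self-contained.
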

	
	We are now in a position to prove the inequality for unicyclic graphs.
	\begin{thm}	
		When $G$ is a unicyclic graph, we have
		\begin{equation*}
			\frac{\sum_{uv\in E\left( G \right)}{\varepsilon _3\left( u \right) \varepsilon _3\left( v \right)}}{m}\le \frac{\sum_{u\in V\left( G \right)}{\varepsilon _{3}^{2}\left( u \right)}}{n}.
		\end{equation*}
	\end{thm}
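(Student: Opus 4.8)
The key reduction is that a unicyclic graph has $m(G)=n(G)$, so the claimed inequality is equivalent to $F_2(G)\le F_1(G)$, i.e.\ $\sum_{uv\in E(G)}\varepsilon_3(u)\varepsilon_3(v)\le\sum_{u\in V(G)}\varepsilon_3^2(u)$. Using Fig.~\ref{Fig3}, I would split $V(G)$ as the disjoint union of $V(C_g)$ and the sets $V(T_{x_i})\setminus\{x_i\}$, and $E(G)$ as the disjoint union of $E(C_g)$ and the edge sets $E(T_{x_i})$ (each $E(T_{x_i})$ containing the edges of the pendant tree $T_{x_i}$ at its root $x_i$), and then prove the two partial estimates
$$\sum_{uv\in E(C_g)}\varepsilon_3(u)\varepsilon_3(v)\le\sum_{u\in V(C_g)}\varepsilon_3^2(u)\qquad\text{and}\qquad\sum_{uv\in E(T_{x_i})}\varepsilon_3(u)\varepsilon_3(v)\le\sum_{u\in V(T_{x_i})\setminus\{x_i\}}\varepsilon_3^2(u),$$
the second for each $i=1,\dots,g$; summing the $g+1$ estimates then gives the theorem. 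The first estimate is immediate: by Lemma~\ref{lem7} the positive integers $\varepsilon_3(x_1),\dots,\varepsilon_3(x_g)$ have cyclic consecutive differences at most $1$, so Lemma~\ref{lem6} applies with $x_i\mapsto\varepsilon_3(x_i)$ and yields it directly.

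For a pendant tree $T_{x_i}$ we have $|E(T_{x_i})|=|V(T_{x_i})\setminus\{x_i\}|$, so the second estimate has exactly the shape of Theorem~\ref{thm1}, and the plan is to carry out the argument of the proof of Theorem~\ref{thm1} inside $T_{x_i}$. This first requires an analogue of Lemmas~\ref{lem3}, \ref{lem4} and \ref{lem5} describing $\varepsilon_3$ on $T_{x_i}$: there should be a ``central segment'' $S_i$ of $T_{x_i}$ (a single vertex or a subpath, possibly containing $x_i$) on which $\varepsilon_3$ is constant and equals $\min_{u\in V(T_{x_i})}\varepsilon_3(u)$, while along any path of $T_{x_i}$ leaving $S_i$ --- towards a leaf or towards $x_i$ --- $\varepsilon_3$ is nondecreasing with consecutive values differing by at most $1$ (Lemma~\ref{lem7} supplies ``at most $1$''; the content is ``nondecreasing'' together with the location of $S_i$, obtained by tracking where the eccentric pairs of the vertices of $T_{x_i}$ lie). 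Once this ``valley'' picture is established, $E(T_{x_i})$ splits into the edges inside $S_i$ (equal $\varepsilon_3$ at the ends) and the remaining edges ($\varepsilon_3$ at the ends differing by $1$), mirroring the split of $E(P_{d+1})$ into $E(P_{[\ell:d-\ell]})$ and $E(P_{[0:\ell]}\cup P_{[d-\ell:d]})$ in Theorem~\ref{thm1}, and the estimate follows from the same two weighted-sum comparisons, using $\varepsilon_3(u)\le n-1$.

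The step I expect to be the main obstacle is exactly this structural description of $\varepsilon_3$ on a pendant tree. In contrast with ordinary eccentricity and with Lemma~\ref{lem4}, there is no identity $\varepsilon_3(u)=d(u,x_i)+\varepsilon_3(x_i)$ for $u\in T_{x_i}$ (an eccentric pair of $u$ may lie inside $T_{x_i}$), and $\varepsilon_3$ need not be monotone along a root-to-leaf path of $T_{x_i}$: it can decrease from $x_i$ down to a ``Fermat centre'' of $T_{x_i}$ and then increase towards the leaves. Identifying that centre and proving one-sided Lipschitz growth away from it, uniformly in the rest of $G$, is where the real work lies; granting it, the remaining comparison is the same bookkeeping as in Theorem~\ref{thm1}.
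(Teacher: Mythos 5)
Your top-level plan coincides with the paper's: use $m=n$, split the difference $F_2-F_1$ into a cycle part plus one part per pendant tree $T_{x_i}$, kill the cycle part with Lemma~\ref{lem7} and Lemma~\ref{lem6}, and prove $\sum_{uv\in E(T_{x_i})}\varepsilon_3(u)\varepsilon_3(v)\le\sum_{u\in V(T_{x_i})\setminus\{x_i\}}\varepsilon_3^2(u)$ for each pendant tree. That decomposition and the cycle estimate are correct and are exactly what the paper does.

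The gap is the pendant-tree estimate, which you explicitly defer, and the plan you sketch for it would not close as stated. The bookkeeping that makes Theorem~\ref{thm1} work is a bijection between the edges being summed and the vertices being summed; in the pendant tree the excluded vertex is forced to be the root $x_i$, so each non-root vertex $u$ must be paired with the edge $uv$ toward $x_i$, giving the term $\varepsilon_3(u)\left(\varepsilon_3(v)-\varepsilon_3(u)\right)$. Your ``valley'' picture (correctly) predicts that $\varepsilon_3$ can strictly decrease as one moves from $x_i$ toward a Fermat centre of $T_{x_i}$; on every edge of that segment the term above is $+\varepsilon_3(u)>0$, so term-by-term nonpositivity fails and ``the same bookkeeping as in Theorem~\ref{thm1}'' does not apply. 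The paper handles this by a cancellation argument: it shows that whenever such a bad edge $v_kv_{k+1}$ occurs between $x_i=v_p$ and the centre, the Fermat eccentric vertices of $v_k,v_{k+1}$ are pinned down well enough to force the mirror edge $v_{d-k-1}v_{d-k}$ on the far side of the centre to contribute $-\varepsilon_3(v_{d-k})$, and the pair sums to $\varepsilon_3(v_{k+1})-\varepsilon_3(v_{d-k})=-1<0$. A second case you do not address is when $x_i$ lies on no diametrical path of $T_{x_i}$ (the paper's Case~II), where the branch $P'$ from the diametrical path out to $x_i$ needs its own pairing against an initial segment $P_{[0:k-d]}$ of the diametrical path. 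So the skeleton is right and the hard step is correctly located, but the proposal stops short of the actual argument there, and the shortcut it suggests in its place is not sound.
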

	\begin{proof}
		We prove it by making a difference. Note that $n=m$ for unicyclic graphs. We thus have
		\begin{align*}
			\frac{\sum_{uv\in E\left( G \right)}{\varepsilon _3\left( u \right) \varepsilon _3\left( v \right)}}{m} - \frac{\sum_{u\in V\left( G \right)}{\varepsilon _{3}^{2}\left( u \right)}}{n}
			=\frac{\sum_{uv\in E\left( G \right)}{\varepsilon _3\left( u \right) \varepsilon _3\left( v \right)}-\sum_{u\in V\left( G \right)}{\varepsilon _{3}^{2}\left( u \right)}}{n}.
		\end{align*}
		It is equivalent to prove that
		\begin{equation*}
			\sum_{uv\in E\left( G \right)}{\varepsilon _3\left( u \right) \varepsilon _3\left( v \right)}-\sum_{u\in V\left( G \right)}{\varepsilon _{3}^{2}\left( u \right)}\le 0 .
		\end{equation*}
		Clearly,
		\begin{align*}
			&\quad\sum_{uv\in E\left( G \right)}{\varepsilon _3\left( u \right) \varepsilon _3\left( v \right)}-\sum_{u\in V\left( G \right)}{\varepsilon _{3}^{2}\left( u \right)}
			\\
			&=\left[ \sum_{uv\in E\left( C_g \right)}{\varepsilon _3\left( u \right) \varepsilon _3\left( v \right)}-\sum_{u\in V\left( C_g \right)}{\varepsilon _{3}^{2}\left( u \right)} \right]+\sum_{i=1}^g{\left[ \sum_{uv\in E\left( T_{x_i} \right)}{\varepsilon _3\left( u \right) \varepsilon _3\left( v \right)}-\sum_{u\in V\left( T_{x_i}\setminus \left\{ x_i \right\}  \right)}{\varepsilon _{3}^{2}\left( u \right)} \right]}.
		\end{align*}
		We can get that
		\begin{equation*}
			\sum_{uv\in E\left( C_g \right)}{\varepsilon _3\left( u \right) \varepsilon _3\left( v \right)}-\sum_{u\in V\left( C_g \right)}{\varepsilon _{3}^{2}\left( u \right)}\le 0.
		\end{equation*}
		by Lemmas \ref{lem7} and  \ref{lem6}.
		
		We now just need to prove
		\begin{equation*}
			\sum_{uv\in E\left( T_{x_i} \right)}{\varepsilon _3\left( u \right) \varepsilon _3\left( v \right)}-\sum_{u\in V\left( T_{x_i}\setminus \left\{ x_i \right\}  \right)}{\varepsilon _{3}^{2}\left( u \right)}\le 0
		\end{equation*}
		for any fixed $T_{x_i}$,  $i=1,\ldots,g$. For a given $x_i$, $i=1,\ldots,g$, we denote $x = x_i$ for convenient.  
		
		In the remaining part of this proof, suppose that $u$ is more distant from $x$ than $v$ for all $uv\in E(G)$. Let us now investigate $T_x $ in two cases.
		
		\noindent	\textbf{Case \uppercase\expandafter{\romannumeral1}:} $x$ is on one of the diametrical paths of $T_x $, see Fig. \ref{Fig5}.
		
		Suppose that $P_{d+1}=v_0v_1\cdots v_d$ is one diametrical path of $T_x$ which contains all central vertices of $T_x$. Set $c=v_t=v_{\lfloor \frac{d}{2} \rfloor}$, when $T_x$ has only one central vertex $c$ of $T_x$, and $c_1 =v_t=v_{\lfloor \frac{d}{2} \rfloor}$, when $T_x$ has two central vertices $c_1, c_2$ of $T_x$. Without loss of generality, let $x$ be on the left side of $c$, which means that $p\leq t$. We denote $v_q$ the symmetry vertex of $v_p$ about the center, i.e., $q = d - p $. Let $T_{v_j}$ be the subtree of $T_x$ with root vertex $v_j$. We also remark $\ell_j=\varepsilon_2(v_j; T_{v_j})$ and $\ell=\max\{\ell_j:j = 1,\ldots,d-1\}$.  Let $\omega'$, $\omega''$ be the vertices that realizes $\varepsilon_3(x;G\setminus T_x)$ and $T_\ell$ be one subtree $T_{v_j}$ with $\ell_j=\ell$. Denote one vertex in $T_\ell$ which is farthest from the root vertex of $T_\ell$ by $\bar{\omega}$. 
		
		\begin{figure}[bpth!]
			\centering
			\includegraphics[width=0.7\textwidth]{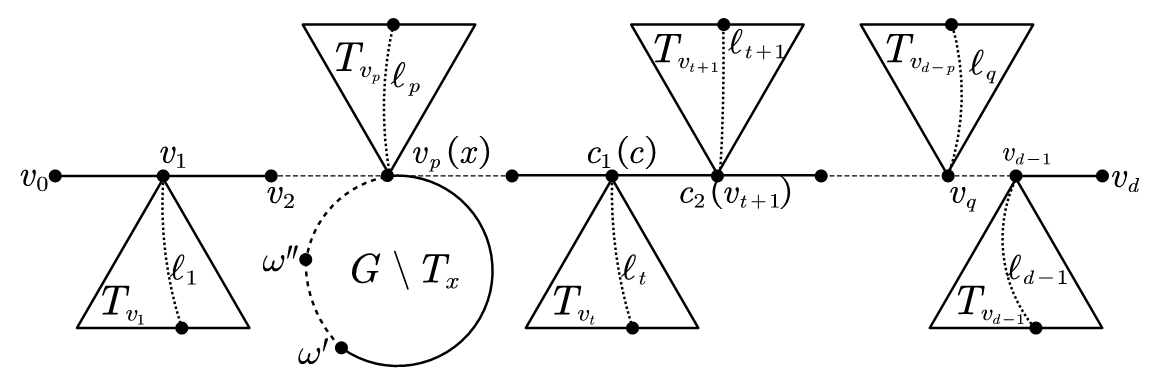}\\
			\caption{$T_x$ expanding at $P_{d+1}$.}\label{Fig5}
		\end{figure}
		
		\begin{enumerate}
			\item  $uv \in E\left(T_{v_j}\right)$, $j = 1,\ldots,d-1$. Recall that $\ell_j\leq\min\{j,d-j\}$. For all $w\in V\left(T_{v_j}\right)$, we can select the Fermat eccentric vertices of $\varepsilon_{3}(w)$ belonging to set $\{v_0,v_d,\omega', \omega'',\bar{\omega}\}$ when $\ell_j<\ell$, and $\{v_0,v_d,\omega',\omega''\}$ when $\ell_j=\ell$. In other words,  $\varepsilon_3(w)=d(w,v_j)+\varepsilon_3(v_j)$ and hence
			$\varepsilon _3\left( u\right)-\varepsilon _3\left( v\right) = 1$.

			So, for any $uv \in E\left(T_{v_j}\right)$, $j=1\ldots,d-1$, we observe that
			\begin{align*}
				&\quad\sum_{uv\in E\left( T_{v_j} \right)}{\varepsilon _3\left( u \right) \varepsilon _3\left( v \right)}-\sum_{u\in V\left( T_{v_j}\setminus\left\{v_j\right\} \right)}{\varepsilon _{3}^{2}\left( u \right)}
				\\
				&=\sum_{u\in V\left( T_{v_j}\setminus \left\{ v_j \right\} \right)}{\varepsilon _3\left( u \right) \left( \varepsilon _3\left( u \right) -1 \right)}-\sum_{u\in V\left( T_{v_j}\setminus\left\{v_j\right\} \right)}{\varepsilon _{3}^{2}\left( u \right)}
				\\
				&=-\sum_{u\in V\left( T_{v_j}\setminus \left\{ v_j \right\} \right)}{\varepsilon _3\left( u \right)}\le 0.
			\end{align*}
			
			\item $uv \in E\left(P_{[0:p]} \cup P_{[q:d]} \right) $.  For $w\in V\left(P_{[0:p]}\right)$, the farthest vertex from $w$ (also one Fermat eccentric vertex of $w$) belongs to $\{v_d,\omega', \omega''\}$. Similarly, for $w\in V\left(P_{[q:d]}\right)$, the farthest vertex from $w$ (also one Fermat eccentric vertex of $w$) belongs to $\{v_0,\omega', \omega''\}$. The two properties ensure that  $0\leq \varepsilon _3\left( u\right)-\varepsilon _3\left( v\right)\leq 1$. Hence,
			\begin{align*}
				&\quad\sum_{uv\in E\left( P_{[0:p]} \cup P_{[q:d]} \right)}{ \varepsilon _3\left( u \right)  \varepsilon _3\left( v \right)}-\sum_{u\in V\left(P_{[0:p]} \cup P_{[q:d]}\setminus\left\{v_p,v_q\right\} \right)}{ \varepsilon _{3}^{2}\left( u \right)}\\
				&\leq\sum_{u\in V\left( P_{[0:p]} \cup P_{[q:d]}\setminus\left\{v_p,v_q\right\} \right)}{ \varepsilon _{3}^{2}\left( u \right)}-\sum_{u\in V\left( P_{[0:p]} \cup P_{[q:d]}\setminus\left\{v_p,v_q\right\} \right)}{ \varepsilon _{3}^{2}\left( u \right)}=0.
			\end{align*}

			\item  $uv \in E\left(P_{[p:q]}\right) $. 
			
			\begin{enumerate}
				\item  For any $uv \in E\left(P_{[t:q]}\right)$, the farthest Fermat eccentric vertex of $u\in V\left(P_{[p:q]}\right)$ is in set $\{v_0,\omega',\omega''\}$. This property leads to
				$0 \le \varepsilon _3\left(u\right) -\varepsilon _3\left( v \right) \le 1$. Hence,
				\begin{align*}
					\sum_{uv\in E\left( P_{[t:q]} \right)}{\varepsilon _3\left( u \right) \varepsilon _3\left( v \right)}-\sum_{u\in V\left( P_{[t:q]}\setminus\left\{v_t\right\} \right)}{\varepsilon _{3}^{2}\left( u \right)} \le 0.
				\end{align*}
				
				\item For any $uv \in E\left(P_{[p:t]}\right)$, we have $-1\leq \varepsilon _3\left( u \right)- \varepsilon _3\left( v \right)\leq 1$ by Lemma \ref{lem7}.
				
				We need to check what occurs when $\varepsilon _3\left( u \right)- \varepsilon _3\left( v \right)=-1$. Recall that $u$ is more distant from $x$ than $v$. Suppose that there exists an edge $ v_kv_{k+1} \in E \left(P_{[p:t]} \right)$ such that $\varepsilon _3\left( v_{k}\right) - \varepsilon _3\left( v_{k+1} \right) = 1$. It is obvious that $\varepsilon _3\left( v_{k}\right)= \varepsilon _3\left( v_k; T_x \right)$ and
				$\varepsilon _3\left( v_{k+1}\right)= \varepsilon _3\left( v_{k+1}; T_x \right)$.
				Moreover, the Fermat eccentric vertices of $\varepsilon _3\left( v_{k}\right)$ and $\varepsilon _3\left( v_{k+1}\right)$ are both $v_d$ and $\bar{\omega}$. We thus have $\ell>\max\{k-p+\ell_p,k\}$ and then find that
				$$ \varepsilon _3\left( v_{d-k}\right)-\varepsilon _3\left( v_{d-k-1}\right)=1.$$
				
				Now we can take $v_kv_{k+1}$ and $v_{d-k-1}v_{d-k}$ into consideration pairwisely. It is clear that
				\begin{equation*}
					\begin{split}
						&\quad\varepsilon _3(v_k)\varepsilon _3(v_{k+1})-\varepsilon _3^2(v_{k+1})+\varepsilon _3(v_{d-k-1})\varepsilon _3(v_{d-k})-\varepsilon_3^2(v_{d-k})\\
						&=\varepsilon _3(v_{k+1})-\varepsilon _3(v_{d-k})=-1<0.
					\end{split}
				\end{equation*}
				
			\end{enumerate}
			So we get the inequation
			\begin{equation*}
				\sum_{uv\in E\left(P_{[p:q]} \right)}{ \varepsilon _3\left( u \right)  \varepsilon _3\left( v \right)}-\sum_{u\in V\left( P_{[p:q]} \setminus \left\{ x \right\} \right)}{ \varepsilon _{3}^{2}\left( u \right)} \le 0.
			\end{equation*}								
		\end{enumerate}
		Combining all three subcases in Case \uppercase\expandafter{\romannumeral1} together yields
		\begin{align*}
			\quad\sum_{uv\in E\left( T_x \right)}{ \varepsilon _3\left( u \right)  \varepsilon _3\left( v \right)}-\sum_{u\in V\left(T_x \setminus \left\{ x \right\} \right)}{ \varepsilon _{3}^{2}\left( u \right)} \le 0.
		\end{align*}	     	
		
		\noindent \textbf{Case \uppercase\expandafter{\romannumeral2}:} $x$ is not on any diametrical paths of the $T_x$. Then we can display $T_x$ in Fig. \ref{Fig6}. Set $P'=v_pv_{d+1}v_{d+2}\cdots v_{s+1}$, and then denote $T_{v_p} = \bigcup_{j=d+1}^{s+1}{T_{v_j}}\cup P'$. Some notations utilized in Case \uppercase\expandafter{\romannumeral1} are no longer reiterated.
		
		\begin{figure*}[htbp]
			\begin{center}
				\includegraphics[width=0.7\textwidth]{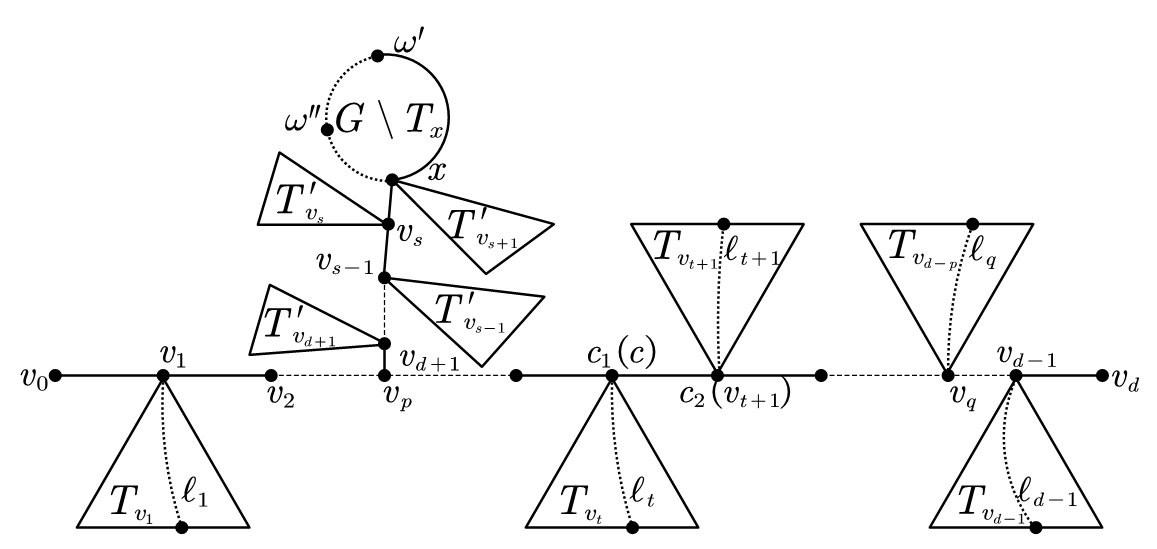}
			\end{center}
			\caption{$T_x$ expanding at $P_{d+1}$. Here let $x=v_{s+1}$ for notational convenience.}
			\label{Fig6}
		\end{figure*}
		\begin{enumerate}
			
			\item $uv\in E\left( T_x\setminus T_{v_p} \right)$.  We claim that
			\begin{align*}
				&\quad\sum_{uv\in E\left( T_x\setminus T_{v_p} \right)}{ \varepsilon _3\left( u \right)  \varepsilon _3\left( v \right)}-\sum_{u\in V\left( T_x\setminus T_{v_p} \right)}{ \varepsilon _{3}^{2}\left( u \right)} \le 0.
			\end{align*}
			Similar to the discussion in Case \uppercase\expandafter{\romannumeral1}, we have the following:
			\begin{itemize}
				\item[(1)] For $uv\in T_{v_j}$, $j=1,2,\ldots,p-1,p+1,\ldots d-1$, we have $\varepsilon_{3}(u)-\varepsilon_{3}(v)=1$.
				\item[(2)] For $uv\in E\left(P_{[0:p]} \cup P_{[q:d]} \right)$, we have $0\leq\varepsilon_{3}(u)-\varepsilon_{3}(v)\leq1$.
				\item[(3)] For $uv\in E\left(P_{[p:q]}\right)$, we focus on the edge $uv$ such that $\varepsilon_{3}(u)-\varepsilon_{3}(v)=-1$. Suppose that there exists an edge $v_kv_{k+1}$ such that
				$\varepsilon _3\left( v_{k}\right) - \varepsilon _3\left( v_{k+1} \right) = 1$, $k\in \{p,\ldots,t\}$.  We can still consider edge $v_{d-k-1}v_{d-k}$ with $\varepsilon _3\left( v_{d-k}\right)-\varepsilon _3\left( v_{d-k-1}\right)=1$, and then get $$\sum_{uv\in E\left(P_{[p:q]} \right)}{ \varepsilon _3\left( u \right)  \varepsilon _3\left( v \right)}-\sum_{u\in V\left( P_{[p:q]} \setminus \left\{ x \right\} \right)}{ \varepsilon _{3}^{2}\left( u \right)} \le 0.$$
			\end{itemize}
			In fact, the validity of the above three statements is obvious for the following reasons: In the discussion of  Case \uppercase\expandafter{\romannumeral1}, our focus solely lies on determining the precise location of Fermat eccentric vertices, without relying on the unicycle property of $G\backslash T_x$. This approach ensures the feasibility of our analysis in   Case \uppercase\expandafter{\romannumeral2} when substituting $G\backslash T_x$ and $T_{v_p}$ in Figure \ref{Fig5} with  $T_{v_p}\cup G\backslash T_x$ and single vertex $\{v_p\}$ in Figure \ref{Fig6}, respectively. So our claim holds.
			
			\item We now investigate edges in $E\left(T_{v_j}\right)$,  $j = d+1,\ldots ,s+1$. Recall that $x$ is not on any diametrical path. We have $\ell_p=\varepsilon_{2}(v_p;T_{v_p})< p$.  Furthermore, for any $uv \in E\left(T_{v_j}\right)$, $j = d+1,\ldots ,s+1$, the Fermat eccentric vertices of $u$ and $v$ do not belong to $V\left(T_{v_p}\right)$. Note that the distant between $u$ and $x$ is greater than that between $v$ and $x$.  Hence we have
			\begin{equation*}
				\varepsilon _3\left( u \right)=\varepsilon _3\left( v_j \right) +d\left( u,v_j \right)
			\end{equation*}
			and
			\begin{equation*}
				\varepsilon _3\left( v \right) =\varepsilon _3\left( v_j \right) +d\left( v,v_j \right) =\varepsilon _3\left( u \right) -1.
			\end{equation*}
			We derive that
			\begin{align*}
				&\sum_{uv\in E\left( T_{v_j} \right)}{\varepsilon _3\left( u \right) \varepsilon _3\left( v \right)}-\sum_{u\in V\left( T_{v_j}\setminus \left\{ v_j \right\} \right)}{\varepsilon _{3}^{2}\left( u \right)}
				\\
				&=\sum_{u\in V\left( T_{v_j}\setminus \left\{ v_j \right\} \right)}{\varepsilon _3\left( u \right) \left( \varepsilon _3\left( u \right) -1 \right)}-\sum_{u\in V\left( T_{v_j}\setminus \left\{ v_j \right\} \right)}{\varepsilon _{3}^{2}\left( u \right)}
				\\
				&= - \sum_{u\in V\left( T_{v_j}\setminus \left\{ v_j \right\} \right)}{\varepsilon _3\left( u \right)}\le 0
			\end{align*}
			for  $j = d+1,\ldots,s+1$.
			
			\item For $uv \in E \left(P'\right) $, note that $-1\leq\varepsilon _3(u)-\varepsilon _3(v)\leq 1$ from Lemma \ref{lem7}. Let us deal with the following two subcases:
			\begin{enumerate}
				\item For all $uv \in  E \left(P'\right)$, $0 \le \varepsilon _3\left( u\right) -\varepsilon _3\left( v \right) \le 1$.
				
				It is obvious that
				\begin{align*}
					&\sum_{uv\in E\left(P' \right)}{\varepsilon _3\left( u \right) \varepsilon _3\left( v \right)}-\sum_{u\in V\left( P'\setminus \left\{x \right\} \right)}{\varepsilon _{3}^{2}\left( u \right)} \le 0 .
				\end{align*}
				
				\item There exists an edge $v_kv_{k+1} \in E\left(P'\right)$ such that $\varepsilon _3\left( v_k\right) -\varepsilon _3\left( v_{k+1} \right) =-1$ and $k$ is maximum. Recall that $v_{k+1}$ is more distant from $x$ than $v_k$ and $\ell_p< \ell$. So the Fermat eccentric vertices of $v_k$ and $v_{k+1}$ must be $\omega'$ and $\omega''$. This statement is also true for every edge $uv$ of path  $v_pv_{d+1}\cdots v_{k+1}$, that is to say, $\varepsilon _3\left(u\right) -\varepsilon _3\left( v \right) =-1$. For any vertex $u$ in $P_{[0:p]}$,  it now becomes evident that the Fermat eccentric vertices of $u$ is also $\omega'$ and $\omega''$. We thus have
				$\varepsilon _3\left(u\right) -\varepsilon _3\left( v \right) =1$ for all $uv\in P_{[0:p]}$.
				
				Taking all the edges $uv\in E\left(P_{[0:k-d]}\right)$ and $P'$ into consideration, we have
				\begin{align*}
					&\quad\sum_{uv\in E(P'\cup P_{[0:k-d]})}\varepsilon _3\left(u\right)\varepsilon _3\left( v \right)- \sum_{u\in V((P_{[0:k-d]}\backslash \{v_{k-d}\})\cup (P'\backslash\{v_p\}))}\varepsilon _3^2\left(u\right)\\&=\sum_{u\in V(P'\backslash\{v_p\})}\varepsilon _3\left(u\right)-\sum_{u\in V((P_{[0:k-d]}\backslash \{v_{k-d}\})}\varepsilon _3(u)\leq 0,
				\end{align*}
				where the last inequality holds due to $\varepsilon _3(v_j)\geq \varepsilon _3(v_{k+1-j})$ for all $j=0,1,2,\ldots, k-d$.
			\end{enumerate}
			Note that although $\varepsilon_3(u)-\varepsilon_3(v)=-1$ may exist in both paths $P_{[p:t]}$ and $P'$. The vertex sets $V\left(P_{[p:q]}\right)$, $V\left(P'\right)$ and $V\left(P_{[0:k-d]}\right)$ we discussed are pairwise disjoint. Therefore, we conclude that
			\begin{align*}
				\sum_{uv\in E\left(T_x \right)}{\varepsilon _3\left( u \right) \varepsilon _3\left( v \right)}-\sum_{u\in V\left( T_x\setminus \left\{ x \right\} \right)}{\varepsilon _{3}^{2}\left( u \right)} \le 0.
			\end{align*}
		\end{enumerate}
		Above all, all cases are proved perspectively, and inequality \eqref{eq:important} holds for unicyclic graphs.
	\end{proof}
	
	\section{Multicyclic graphs}\label{sec4}
	In this section, we give two counterexamples which show that inequality \eqref{eq:important} does not always apply to general graphs.
	
	\begin{figure*}[htbp]
		\begin{center}
			\includegraphics[width=0.4\textwidth]{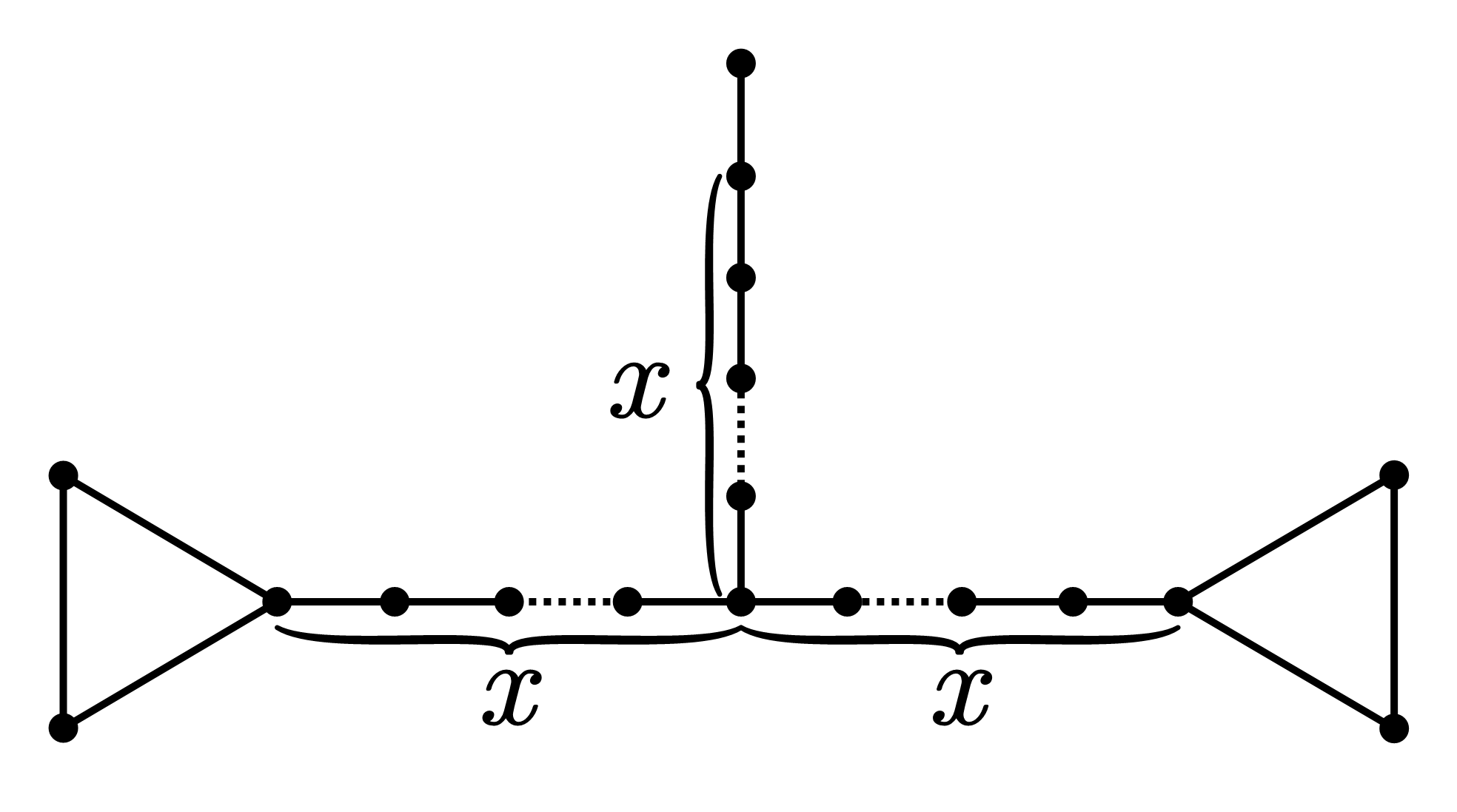}
		\end{center}
		\caption{Graph $\tilde{G}_{2,x}$.}
		\label{Fig7}
	\end{figure*}
	We give a bicyclic graph with $3x+6$ vertices shown in Fig. \ref{Fig7}. One can easily compute that
	\begin{equation*}
		\frac{\sum\limits_{u\in V(\tilde{G}_{2,x})}\varepsilon_3^2(u)}{n(\tilde{G}_{2,x})}-\frac{\sum\limits_{uv\in E(\tilde{G}_{2,x})}\varepsilon_3(u)\varepsilon_3(v)}{m(\tilde{G}_{2,x})}=\frac{-\frac{1}{2}x^3+31x^2+173x+55}{(3x+6)(3x+7)}.
	\end{equation*}
	
	We check that
	\begin{equation*}
		\frac{\sum\limits_{u\in V(\tilde{G}_{2,67})}\varepsilon_3^2(u)}{n(\tilde{G}_{2,67})}-\frac{\sum\limits_{uv\in E(\tilde{G}_{2,67})}\varepsilon_3(u)\varepsilon_3(v)}{m(\tilde{G}_{2,67})}>0
	\end{equation*}
	and
	\begin{equation*}
		\frac{\sum\limits_{u\in V(\tilde{G}_{2,68})}\varepsilon_3^2(u)}{n(\tilde{G}_{2,68})}-\frac{\sum\limits_{uv\in E(\tilde{G}_{2,68})}\varepsilon_3(u)\varepsilon_3(v)}{m(\tilde{G}_{2,68})}<0.
	\end{equation*}
	
	We next consider a graph with multiple cycles, as shown in Fig. \ref{Fig8}.
	\begin{figure*}[htbp]
		\begin{center}
			\includegraphics[width=0.4\textwidth]{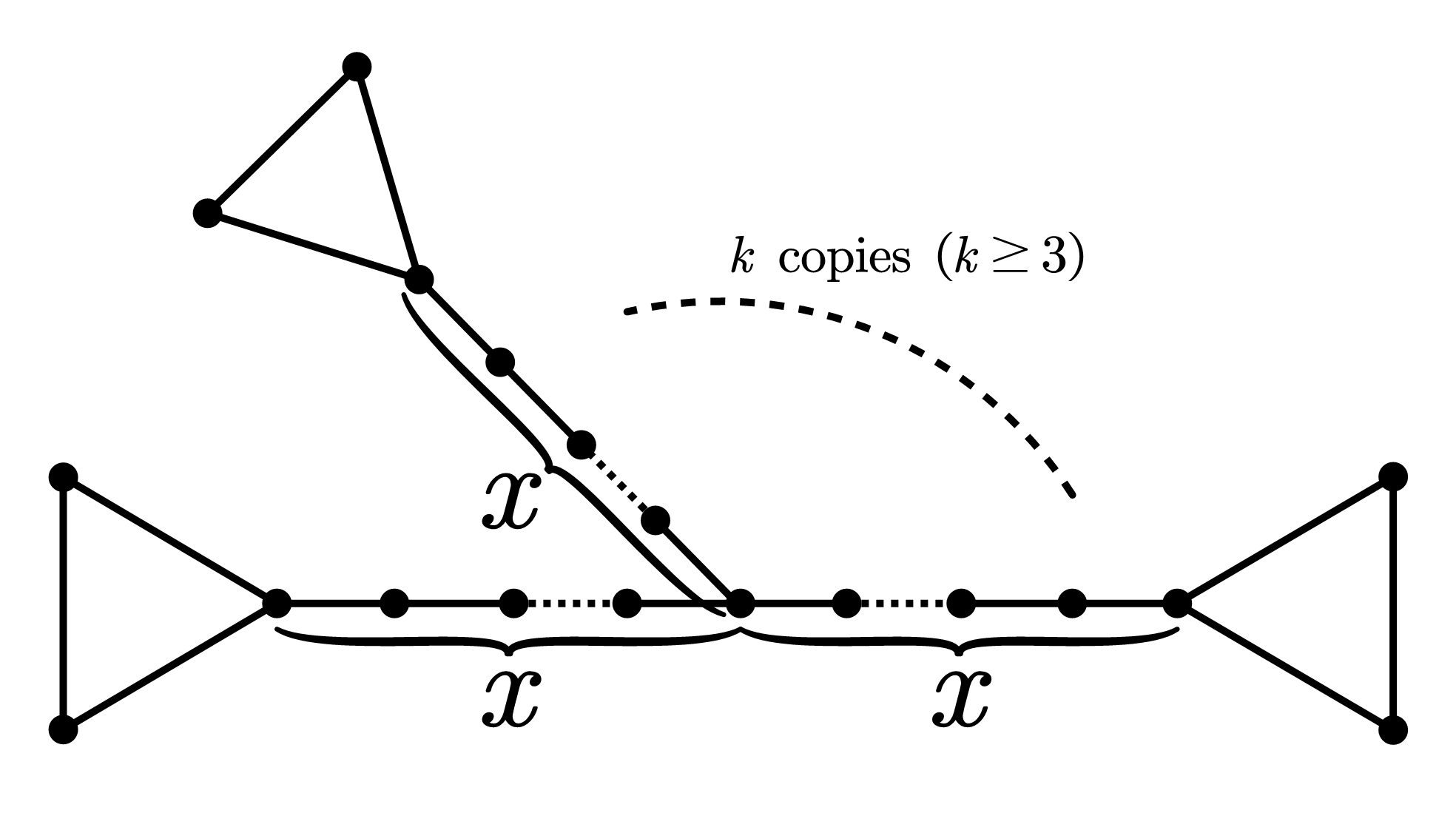}
		\end{center}
		\caption{Graph $G_{k,x}$ ($k\geq 3$).}
		\label{Fig8}
	\end{figure*}
	
	Without difficulty, we can get the following equation:
	\begin{align*}
		&\frac{\sum\limits_{u\in V(G_{k,x})}\varepsilon_3^2(u)}{n(G_{k,x})}-\frac{\sum\limits_{uv\in E(G_{k,x})}\varepsilon_3(u)\varepsilon_3(v)}{m(G_{k,x})}=\\
		& \quad\frac{\left(-\frac{1}{6}k^2-\frac{26k}{3}\right)x^3+(8k^2-54k)x^2+\left(\frac{121k^2}{6}-\frac{226k}{3}\right)x+12k^2-30k}{(kx+3k)(kx+2k+1)}.
	\end{align*}
	
	It is clear that
	$$\frac{\sum\limits_{u\in V(G_{k,x})}\varepsilon_3^2(u)}{n(G_{k,x})}-\frac{\sum\limits_{uv\in E(G_{k,x})}\varepsilon_3(u)\varepsilon_3(v)}{m(G_{k,x})}>0$$
	when $x=0$, while
	$$\frac{\sum\limits_{u\in V(G_{k,x})}\varepsilon_3^2(u)}{n(G_{k,x})}-\frac{\sum\limits_{uv\in E(G_{k,x})}\varepsilon_3(u)\varepsilon_3(v)}{m(G_{k,x})}<0$$ for $x$ large enough.
	
	The above two counterexamples demonstrate that inequality \eqref{eq:important} and its opposite inequality are not always valid for graphs with at least two cycles.
	\section{Conclusion}
	In this paper, we have investigated inequality
	\begin{equation*}
		\frac{\sum_{u\in V\left( G \right)}{\varepsilon _3^2\left( u \right)}}{n(G)}\geq\frac{\sum_{uv\in E\left( G \right)}{\varepsilon _3\left( u \right) \varepsilon _3\left( v \right)}}{m(G)}
	\end{equation*}
	and proved that this inequality holds for all acyclic and unicycle graphs. We negate the validity of this inequality and its opposite inequality in graphs with at least two cycles. The properties and applications of Zagreb-Fermat indices, or more generally, Zagreb Steiner $k$-indices, need further study. Zagreb-type indices are important in graphs, so a lot of extensive research and development still needs to be done.
	
	\section*{Acknowledgments}
	The research is partially supported by the Natural Science Foundation of China (No. 12301107) and the Natural Science Foundation of Shandong Province, China (No. ZR202209010046).
	\section*{Declaration of interest statement}
	All authors disclosed no relevant relationships.
	
	\bibliographystyle{unsrt}
	\bibliography{Ref}
\end{document}